\newtheorem{theorem}{Theorem}
\newtheorem{lemma}[theorem]{Lemma}
\newtheorem{defn}[theorem]{Definition}
\newtheorem{prop}[theorem]{Proposition}
\title{Semimartingale decomposition of convex functions of continuous semimartingales by Brownian perturbation}
\author{Nastasiya F. Grinberg \\ {\it Department of Statistics, University of Warwick}\\{\tt N.F.Grinberg@gmail.com}}
\begin{document}
\maketitle
\newcommand{\sub}{\overline{\nabla}f(x)}
\newcommand{\subo}{\overline{\nabla}f}
\newcommand{\gat}{Df(x)[y]}
\newcommand{\gato}[1]{Df(x)[#1]}
\newcommand{\nc}{\nabla f(x)\cdot y}
\newcommand{\nco}{\nabla f(x)\cdot}
\newcommand{\snc}{\overline{\nabla}f(x)\cdot y}
\newcommand{\snco}{\overline{\nabla}f(x)\cdot}
\newcommand{\sncn}{\overline{\nabla}f_n(x)\cdot y}
\newcommand{\sncno}{\overline{\nabla}f_n(x)\cdot}

In this note we prove that the local martingale part of a convex function $f$ of a $d$-dimensional semimartingale $X=M+A$ can be written in terms of an It\^{o} stochastic integral $\int H(X)dM$, where $H(x)$ is some particular measurable choice of subgradient $\sub$ of $f$ at $x$, and $M$ is the martingale part of $X$. This result was first proved by Bouleau in \cite{Bouleau.81}. Here we present a new treatment of the problem. We first prove the result for $\widetilde{X}=X+\epsilon B$, $\epsilon>0$, where $B$ is a standard Brownian motion, and then pass to the limit as $\epsilon\rightarrow 0$, using results in \cite{Barlow.Protter.90} and \cite{Carlen.Protter.92}. The former paper concerns convergence of semimartingale decompositions of semimartingales, while the latter studies a special case of converging convex functions of semimartingales.
\bigskip

\section{Introduction}

Consider a general convex function $f:\mathbb{R}^d\rightarrow \mathbb{R}$, not necessarily everywhere differentiable. Every differentiable point $x\in\mathbb{R}^d$ has a unique tangential hyperplane, while at non-differentiable points there is a whole set of supporting hyperplanes. For a continuous semimartingale $X$ with decomposition $X=M+A$ we prove that the (local) martingale part of $f(X)$ can be expressed in terms of a stochastic integral of a measurable selection of a subgradient $\subo(X)$ against $M$. For piecewise linear 1-dimensional convex functions this follows from the Meyer-Tanaka formula. For example, for $f(x)=\vert x\vert$ we have $\sub=\hbox{sgn}(x)$, where $\hbox{sgn}(x)=-1$ if $x\leq 0$ and 1 otherwise. So at the origin, which is the only point where derivative is not defined, we can take the supporting line to be $y=-x$. Moreover, since Brownian motion spends zero time in Lebesgue-null sets, we can in fact choose $\overline{\nabla}f(0)$ to be any number in the interval $[-1,+1]$ (corresponding to the possible slopes of supporting lines at 0).\smallskip

The main result of this note is the following\par\smallskip


\begin{theorem}\label{C-theorem 1} Let $f:\mathbb{R}^d\rightarrow \mathbb{R}$ be a convex function and let $X$ be a continuous $\mathbb{R}^d$-valued semimartingale with Meyer decomposition $X_t=X_0+M_t+A_t$ which is defined on filtered probability space $(\Omega, \mathcal{F}, \{\mathcal{F}_t\}_{t\geq 0}, \mathbb{P})$. Then $f(X)$ is again a continuous semimartingale; in particular, its local martingale part is given by

\begin{equation*}
\int^t_0 \subo(X_s)dM_s\ ,\quad \text{locally in $\mathcal{H}^1$}\ ,
\end{equation*}

\noindent{where $\sub$ is some choice of subgradient of $f$ at $x$, such that $\overline{\nabla}f(X_t)$ is $\mathcal{F}_t$-measurable for all $t\geq 0$.}
\end{theorem}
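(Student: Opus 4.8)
The plan follows the strategy outlined in the abstract: prove the result first for a perturbed process $\widetilde{X}=X+\epsilon B$, then pass to the limit.

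The plan is to follow the two-stage strategy announced in the abstract. First I would dispose of the smooth case: if $g\in C^2(\mathbb{R}^d)$, then It\^o's formula gives
\[
g(X_t)=g(X_0)+\int_0^t \nabla g(X_s)\cdot dM_s+\int_0^t \nabla g(X_s)\cdot dA_s+\tfrac12\int_0^t \mathrm{tr}\big(\nabla^2 g(X_s)\,d\langle M\rangle_s\big),
\]
so the local martingale part of $g(X)$ is $\int_0^t \nabla g(X_s)\cdot dM_s$. For a general convex $f$ I would introduce the mollifications $f_n=f*\phi_n$ for a smooth, compactly supported approximate identity $\phi_n$; each $f_n$ is smooth and convex, with $f_n\to f$ and $\nabla f_n\to\subo$ pointwise at every point of differentiability of $f$, hence Lebesgue-a.e. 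Applying the $C^2$ case to $f_n$ yields a decomposition whose local martingale part is $\int_0^t \nabla f_n(X_s)\cdot dM_s$, and it then remains to pass to the limit $n\to\infty$.

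The obstruction to passing to the limit directly is the set $N\subset\mathbb{R}^d$ on which $f$ fails to be differentiable: although $N$ is Lebesgue-null, $X$ may spend $d\langle M\rangle$-positive time inside $N$, so that $\nabla f_n(X_s)\to\subo(X_s)$ can fail on a non-negligible set. I would remove this difficulty by passing to $\widetilde X=X+\epsilon B$ with martingale part $\widetilde M=M+\epsilon B$. The added Brownian component makes $\langle\widetilde M\rangle_t\succeq \epsilon^2 t\,I$, so the occupation measure of $\widetilde X$ weighted by $d\langle\widetilde M\rangle$ is absolutely continuous with respect to Lebesgue measure; consequently $\int_0^t \mathbf 1_N(\widetilde X_s)\,d\langle\widetilde M\rangle_s=0$ almost surely. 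This forces $\nabla f_n(\widetilde X_s)\to\subo(\widetilde X_s)$ for $d\langle\widetilde M\rangle$-almost every $s$, and the uniform local Lipschitz bounds on the $f_n$ supply the domination needed for the dominated convergence theorem for stochastic integrals. Combined with $f_n(\widetilde X_t)\to f(\widetilde X_t)$, this identifies the local martingale part of $f(\widetilde X)$ as $\int_0^t \subo(\widetilde X_s)\,d\widetilde M_s$, proving the theorem for the perturbed process.

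Finally I would let $\epsilon\to0$, where $\widetilde X\to X$ and $\widetilde M\to M$, aiming for $\int_0^t \subo(\widetilde X_s)\,d\widetilde M_s\to\int_0^t \subo(X_s)\,dM_s$ together with $f(\widetilde X_t)\to f(X_t)$, which exhibits the claimed decomposition of $f(X)$. This is the main obstacle: the integrand $\subo$ is discontinuous precisely on $N$, and the nondegeneracy that allowed us to ignore $N$ is lost in the limit, since the unperturbed $X$ may genuinely linger on the non-smooth set. Controlling the contribution of a neighbourhood of $N$ uniformly in $\epsilon$, and identifying the limiting martingale part as an integral against $M$ rather than $\widetilde M$, is exactly the role of the convergence results of Barlow and Protter \cite{Barlow.Protter.90} and of Carlen and Protter \cite{CarlenProtter92}, which guarantee that the Meyer decompositions of a convergent family of semimartingales converge to the decomposition of the limit with the requisite control on the bracket and local-time terms. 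A last point requiring care is the selection itself: one fixes $\subo$ to equal $\nabla f$ off $N$ and a measurable choice from the subdifferential $\partial f$ on $N$ (for instance the minimal-norm element), which is immaterial for the perturbed integrals but must be pinned down to make the limiting statement precise and to secure the measurability $f(X_t)\in m\mathcal F_t$.
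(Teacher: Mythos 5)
Your overall architecture matches the paper's: Brownian perturbation $\widetilde X=X+\epsilon B$ so that non-differentiability points of $f$ are not charged, smooth convex approximation plus It\^o for the perturbed process, and then the Barlow--Protter/Carlen--Protter limit theorems to pass $\epsilon\to 0$. However, there is a genuine gap at the final identification step, and it is exactly the step you flag as ``the main obstacle'' and then delegate to \cite{Barlow.Protter.90} and \cite{CarlenProtter92}. Those theorems do not do that job: they tell you that $f(X)$ is a semimartingale and that the martingale parts $\int\overline{\nabla}f(\widetilde X^{(\epsilon)})\,d\widetilde M^{(\epsilon)}$ converge in (local) $\mathcal H^1$ to the martingale part of $f(X)$, but they say nothing about that limit being a stochastic integral $\int H(X)\,dM$ for a subgradient selection $H$. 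To get that, you must show that the integrands themselves converge, i.e.\ that $\overline{\nabla}f(X_s+\epsilon B_s)$ has an a.s.\ limit lying in $\partial f(X_s)$ as $\epsilon\downarrow 0$ --- even when $X_s$ sits at a kink, which a general semimartingale can do for $d\langle M\rangle$-positive time (unlike the perturbed process). This is the content of the paper's Lemma 9, proved in the appendix: for every $x$ and almost every direction $y\in S^{d-1}$, $\lim_{\epsilon\downarrow 0}\overline{\nabla}f(x+\epsilon y)$ exists, is independent of the selection made at $x+\epsilon y$, and belongs to $\partial f(x)$. Your proposal contains no substitute for this radial-limit lemma, so the convergence \eqref{c11} cannot be established as sketched.

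Relatedly, your choice of selection is the wrong one for this argument: you propose to pin down $H$ as $\nabla f$ off the singular set and, say, the minimal-norm element of $\partial f$ on it. But the limiting procedure does not produce the minimal-norm selection; it produces the radial limit along the Brownian direction, which is generically a \emph{boundary} point of $\partial f(x)$. Already for $f(x)=\vert x\vert$ the radial limits are $\pm 1$, never the minimal-norm element $0$. Proving the theorem for an arbitrarily prescribed measurable selection (minimal-norm included) is Bouleau's stronger result in \cite{Bou84}, which this perturbation argument does not deliver; the theorem as stated only requires \emph{some} selection, and the proof must take $H(X_t):=\lim_{\epsilon\downarrow 0}\overline{\nabla}f(X_t+\epsilon B_t)$, which is well defined precisely because of Lemma 9. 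A smaller point: your justification that the $d\langle\widetilde M\rangle$-weighted occupation measure of $\widetilde X$ is Lebesgue-absolutely continuous appeals to the nondegeneracy $\langle\widetilde M\rangle_t\succeq\epsilon^2tI$, but nondegeneracy of the bracket is not the relevant mechanism; what is needed (and what the paper's Lemma 7 provides) is that $\widetilde X_s$ has a density for each $s>0$, via independence of $B$ from $X$ and a conditional Fubini argument, which then also handles the possibly Lebesgue-singular part $d\langle M\rangle_s$ of $d\langle\widetilde M\rangle_s$.
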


\noindent{The first part of the theorem stating that $f(X)$ is a semimartingale was proved by Meyer \cite{Meyer.76} and later by Carlen and Protter \cite{Carlen.Protter.92}. Meyer just proves that $f(X)$ is a semimartingale, while Carlen and Protter express the martingale and the finite variation process parts of the decomposition in terms of certain limits. Neither of the papers however give an explicit semimartingale decomposition of $f(X)$. In \cite{Bouleau.81}, Bouleau took a step further and proved that at each $x\in \hbox{dom}(f)$ there exists a choice $H(x)$ of a subgradient $\sub$ of $f$ such that the martingale part of the decomposition of $f(X)$ can be expressed as an It\^{o} stochastic integral $\int H(X)dM$. In the follow-up paper \cite{Bouleau.84} he proves the conjecture stated in \cite{Bouleau.81} that  in fact {\it any measurable choice} of $H(x)$ can be used. In this note we are proving the first of the two results using an approach completely different to that in \cite{Bouleau.81}.}\par\smallskip

There are many other papers on extending the It\^{o}'s formula by considering different classes of functions $f$ or stochastic processes, or both.
In \cite{russo.vallois.96}, for example, Russo and Vallois derive It\^{o}'s formula for $\mathcal{C}^1(\mathbb{R}^d)$-functions of continuous semimartingales whose time-reversals are also continuous semimartingales. They also extend the formula to the case of $\mathcal{C}^1(\mathbb{R}^d)$-functions with first order derivatives being H\"{o}lder-continuous with any parameter and the process given by a stochastic flow generated by a so-called $C^0(\mathbb{R}^d,\mathbb{R}^d)$-semimartingale. In both cases the quadratic variation process is expressed in terms of the generalised quadratic covariation process $\langle f'(X), X\rangle_t$ introduced by the authors in an earlier paper \cite{russo.vallois.95} (see also a paper by Fuhrman and Tessitore \cite{fuhrman.tessitore.05}, where authors extend the notion of the generalised quadratic covariation further to the infinite-dimensional case and to non-differentiable functions). In \cite{follmer.protter.shiryayev.95}, F\"{o}llmer, Protter and Shiryayev consider the case of an absolutely continuous function $f$ with a locally square integrable derivative and $X$ a 1-dimensional Brownian motion, for which a version of It\^{o}'s formula is derived with the finite variation part expressed again in terms of the quadratic covariation $\langle f'(B), B\rangle_t$. The multidimensional case (where $f$ belongs to the Sobolev space $\mathbb{W}^{1,2}$) is treated in \cite{follmer.protter.00}. In \cite{Kendall.87}, Kendall discusses a semimartingale decomposition of $r(B)$, where $r$ is a distance function of a Brownian motion on a manifold. The problem tackled in \cite{Kendall.87} is similar to ours as $r$ fails to be differentiable on a set of measure zero, called the cut-locus. It is proved in \cite{Kendall.87} that $r(B)$ is a semimartingale and its canonical decomposition is found explicitly in the sequel \cite{Cranston.Kendall.March.93}.\par\smallskip

The layout of the paper is as follows. In Sections 2 and 3 we introduce some notations and preliminary results concerning convex functions, including some important results on differentiability; in particular, in Section 3 we explain that a proper convex function is everywhere differentiable (i.e. has a unique supporting hyperplane) except on a set of measure zero. Hence, by virtue of observing that a Brownian perturbation of our semimartingale $\widetilde{X}_t^{(\epsilon)}=X_t+\epsilon B_t$ has a probability density at every time $t$, we show that for a convex function $f$ the gradient $\nabla f(\widetilde{X}_t^{(\epsilon)})$ is defined for all $t$ almost everywhere. To show that the martingale part of $f(\widetilde{X}^{(\epsilon)})$ is given by $\int \subo(\widetilde{X}^{(\epsilon)})d\widetilde{M}^{(\epsilon)}$, where $\widetilde{M}^{(\epsilon)}=M+\epsilon B$ and $\subo$ is some measurable choice of a subgradient, we approximate $f$ by a sequence of $\mathcal{C}^2$ convex functions $f_n:\mathbb{R}^d\rightarrow \mathbb{R}$, $n\geq 1$; this is done in Section 5. The martingale part of each $f_n(\widetilde{X}_t^{(\epsilon)})$ is known explicitly from It\^{o}'s formula and is equal to $\int {\nabla} f_n(\widetilde{X}^{(\epsilon)})d\widetilde{M}^{(\epsilon)}$. Convergence of the stochastic integral $\int {\nabla} f_n(\widetilde{X}^{(\epsilon)})d\widetilde{M}^{(\epsilon)}$ to $\int \subo(\widetilde{X}^{(\epsilon)})d\widetilde{M}^{(\epsilon)}$ is ensured by the result of Carlen and Protter \cite{Carlen.Protter.92}. We conclude by proving the convergence $\lim_{\epsilon\downarrow 0} \int \subo(\widetilde{X}^{(\epsilon)})d\widetilde{M}^{(\epsilon)}=\int \subo(X)dM$ in Section 6. Section 4 deals with a special case when $f$ is piecewise linear. By proving a generalised version of Meyer-Tanaka formula we find the local martingale part of $f(X)$ and thus prove Theorem 1 for such $f$. We conclude by giving a particular example of a subgradient that satisfies Theorem 1. \par\bigskip


\section{Convex functions: some notations and results}

In order to prove the main result of this note, we require some notations and results from convex analysis. Proofs of the results stated in this section and more details on convex functions are given in \cite{rockafellar}. See also \cite{giles}.

Let $f$ be any function living on $\mathbb{R}^d$ and taking values in $[-\infty, +\infty]$. At any point $x\in\mathbb{R}^d$ we define the {\it one-directional derivative of $f$ with respect to a vector $y\in\mathbb{R}^d$}, if it exists, as follows
\begin{equation*}
\gat:=\lim_{\lambda\downarrow 0}\frac{f(x+\lambda y)-f(x)}{\lambda}\ .
\end{equation*}

\noindent{The two sided derivative at $x$ in direction $y$ exists if and only if $-Df(x)[-y]$, defined by}
\begin{equation*}
-Df(x)[-y]:=\lim_{\lambda\uparrow 0}\frac{f(x+\lambda y)-f(x)}{\lambda} \ ,
\end{equation*}

\noindent{is also well-defined and}
\begin{equation}\label{c1}
 \gat=-\gato{-y} \ .
 \end{equation}

{Now, if the function $f$ is convex, then the one-directional derivative always exists and, moreover, we may write}
\begin{equation}\label{c2}
\gat=\inf_{\lambda> 0}\frac{f(x+\lambda y)-f(x)}{\lambda}\ .
\end{equation}

\noindent{Furthermore, $\gat$ is positively homogeneous (i.e. $\gato{\lambda y}=\lambda \gat$ for $\lambda\in(0,\infty)$), convex in $y$ with $\gato{0}=0$ \cite[Thm. 23.1]{rockafellar} and}
\begin{equation}\label{c3}
\gat\geq -\gato{-y} \ .
\end{equation}

If for a convex function $f$ defined on $\mathbb{R}^d$ and finite at some $x\in\mathbb{R}^d$ all directional derivatives at $x$ exist, are two-sided and finite then we have (\cite[Thm. 25.2]{rockafellar})
\begin{equation*}
\gat=\langle \nabla f(x), y\rangle,\quad \forall y\in\mathbb{R}^d\ ,
\end{equation*}

\noindent{where}
\begin{equation*}
\nabla f(x):=\left( \frac {\partial f}{\partial x_1}(x),...,\frac{\partial f}{\partial x_d}(x)\right)
\end{equation*}

\noindent{is the \emph{gradient} of $f$ at $x=(x_1,...,x_d)$. Note that $\frac{\partial f}{\partial x_i}(x)=Df(x)[e_i]$, where $e_i$ is the $i^{\text{th}}$ canonical basis vector of $\mathbb{R}^d$.

Of course a general convex function $f$ is not necessarily everywhere differentiable, a simple example being $f(x)=\vert x\vert$ which is not differentiable at $x=0$. We can, however, define a set of {\it subgradients} at each $\lq\lq$troublesome'' point like this. \par\smallskip


\begin{defn}\label{C-defn 2} Let $f:\mathbb{R}^d\rightarrow \mathbb{R}$ be a convex function. A subgradient $\sub$ of $f$ at $x\in\mathbb{R}^d$ is a gradient of an affine hyperplane $h(x)=\alpha+\beta^Tx$, for $\alpha, \beta\in\mathbb{R}^d$, passing through the point $(x, f(x))$ and satisfying
\begin{equation*}h(x')\leq f(x')\end{equation*}

\noindent{for all $x'\ne x$.}
\end{defn}

We say that $h$ is a {\it supporting hyperplane} of $f$ at a point $(x, f(x))$. Clearly, at differentiable points $h$ is unique and is just the tangent of $f$. Conversely, at points where $f$ is not differentiable we can construct infinitely many tangential hyperplanes $h$. The set of all subgradients at $x$ is called the {\it subdifferential} of $f$ at $x$, denoted $\partial f(x)$. A convex function with finite values is {\it subdifferentiable} everywhere. In subsequent sections we will need the following result \par\smallskip


\begin{theorem}\label{C-theorem 3} (\cite[Thm. 23.2]{rockafellar})  Let $f$ be a convex function and $x$ a point at which $f$ is finite. Then $\sub$ is a subgradient of $f$ at $x$ if and only if
\begin{equation}\label{c4}
\gat\geq \langle \overline{\nabla}f(x), y\rangle \qquad \forall y\in\mathbb{R}^d\backslash \{0\} \ .
\end{equation}
\end{theorem}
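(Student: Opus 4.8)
The plan is to derive each implication by translating between the defining inequality of a subgradient in Definition~\ref{C-defn 2} and the infimum representation (c2) of the directional derivative. First I would restate the definition in functional form: $\sub$ is a subgradient of $f$ at $x$ precisely when the affine function $h(z)=f(x)+\langle\sub,z-x\rangle$ lies below $f$, i.e. the subgradient inequality $f(z)\ge f(x)+\langle\sub,z-x\rangle$ holds for every $z\in\mathbb{R}^d$.

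For the forward implication, I would assume this inequality and fix $y\neq 0$. Substituting $z=x+\lambda y$ for an arbitrary $\lambda>0$ and using $\langle\sub,\lambda y\rangle=\lambda\langle\sub,y\rangle$ gives $f(x+\lambda y)-f(x)\ge\lambda\langle\sub,y\rangle$, so each difference quotient $\frac{f(x+\lambda y)-f(x)}{\lambda}$ is bounded below by $\langle\sub,y\rangle$. Taking the infimum over $\lambda>0$ and applying (c2) yields $\gat\ge\langle\sub,y\rangle$, which is exactly (c4).

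For the reverse implication, I would assume (c4) for all $y\neq 0$ and take an arbitrary $z\neq x$, setting $y=z-x$. Because (c2) writes $\gat$ as an infimum over all $\lambda>0$, the single choice $\lambda=1$ shows $\gat\le f(x+y)-f(x)=f(z)-f(x)$. Combining this with the hypothesis $\gat\ge\langle\sub,y\rangle$ gives $f(z)-f(x)\ge\langle\sub,z-x\rangle$, the subgradient inequality; the case $z=x$ is trivial equality, so $\sub$ is a subgradient by Definition~\ref{C-defn 2}.

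The argument is short, and the step I would flag as the crux is that both directions pivot on the infimum representation (c2) rather than on the bare limit defining $\gat$. In the forward direction (c2) lets me upgrade ``every quotient dominates $\langle\sub,y\rangle$'' to ``the directional derivative dominates $\langle\sub,y\rangle$''; in the reverse direction the fact that $\gat$ is an infimum, hence a lower bound for each individual quotient and in particular the one at $\lambda=1$, is exactly what converts the infinitesimal inequality (c4) back into the global subgradient inequality. Convexity enters only through (c2), which is already cited, so no further use of convexity is required.
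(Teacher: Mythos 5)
Your proof is correct. The paper itself gives no proof of this statement---it is quoted directly from Rockafellar \cite[Thm.\ 23.2]{Rock}---and your argument (passing between the global inequality $f(z)\ge f(x)+\langle \sub, z-x\rangle$ and the infimum representation \eqref{c2}, using monotonicity of the difference quotients in one direction and the single evaluation $\lambda=1$ in the other) is precisely the standard textbook proof of that result, so there is nothing to contrast.
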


The theorem above says that a subgradient at $x$ in the direction of $y$ will always be less or equal to the one-sided directional derivative at $x$ with respect to $y$. Relation \eqref{c4} is called the {\it subgradient inequality} and can be used as an alternative definition of a subgradient.
\par

Finally we mention the Lipschitz continuity property of convex functions (see, for example, \cite[Ch. 3.1, Thm. 10]{giles}): if $f$ is a \emph{continuous} convex function on $\mathbb{R}^d$ and $U$ is an open convex subset of $\mathbb{R}^d$, then for all $u\in U$ there exist constants $K>0$ and $\epsilon>0$ such that
\begin{equation*}
\vert f(x)-f(y)\vert \leq K \| x-y\|, \quad \forall x,y\in B_u(\epsilon)\ ,
\end{equation*}

\noindent{where $B_u(\epsilon)$ is an open ball of radius $\epsilon$ centered at $u$ and $\|\cdot\|$ is the usual Euclidean norm.}

\section{ Differential theory of convex functions}

In this section we study differntiability of convex functions and also state and prove certain results concerning convergence of gradients and subgradients of convex functions. In what follows we assume that $f$ is \emph{proper}, i.e. $f(x)<+\infty$ for at least one $x$ and $f(x)>-\infty$ for all $x$. By $\text{dom}f$ we denote the \emph{effective domain} of $f$, that is $\text{dom}f=\{x\in\mathbb{R}^d: f(x)<\infty\}$. We denote by $\text{int}(\text{dom}f)$ the interior of $\text{dom}f$.

Suppose a convex function $f:\mathbb{R}^d\rightarrow\mathbb{R}$ is finite at some point $x\in\mathbb{R}^d$. Then $f$ is differentiable at $x$ if and only if the directional derivative $Df(x)[\cdot]$ is linear on $\mathbb{R}^d$. Moreover, in order for this condition to be satisfied, it suffices that the partial derivatives with respect to the basis vectors of $\mathbb{R}^d$ exist at $x$ (\cite[Thm. 25.2]{rockafellar}). Let us denote by $\mathcal{D}$ the set of points in the domain of $f$ at which the supporting hyperplane is unique, i.e. at which $f$ is differentiable. It is known (\cite[Thm. 25.4]{rockafellar}) that for a proper convex function $f$ the set $\mathcal{D}$ is dense in $\text{int}(\text{dom}f)$ and that its complement in $\text{int}(\text{dom}f)$ \emph{is a set of measure zero}. Consequently any process whose law has a probability density at each time $t>0$ spends time of measure zero in $\mathcal{D}^c$, an important fact we will use in the sequel.

To prove Theorem \ref{C-theorem 1} for a general (continuous and proper but not necessarily differentiable) convex $f$ we will approximate it by a sequence of twice continuously differentiable convex functions $f_n:\mathbb{R}^d\rightarrow\mathbb{R}$, $n\geq 1$, to which we know It\^{o}'s formula can be applied. On top of this, working with convex functions gives us an advantage of being able to deduce from the pointwise convergence of the functions something about the convergence of their corresponding gradients.

\begin{theorem}\label{C-theorem 4} (variation of \cite[Thm. 25.7]{rockafellar})
Let $f$ be a convex function defined on $\mathbb{R}^d$ and $\{f_n\}_{n\geq 1}$ a sequence of smooth convex functions on $\mathbb{R}^d$ such that $\lim_{n\rightarrow\infty}f_n(x)=f(x)$ $\forall x\in\mathbb{R}^d$. Let $\mathcal{D}\subseteq\text{int}(\text{dom}f)$ be the set of points where $f$ is differentiable. Then
\begin{equation}\label{c5}
\lim_{n\rightarrow\infty}\nabla f_n(x)=\nabla f(x)\qquad \forall x\in \mathcal{D} \ .
\end{equation}
\end{theorem}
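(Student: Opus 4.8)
The plan is to use the fact that, for a smooth convex function, the gradient \emph{is} a subgradient, and that subgradients survive passage to the limit. Fix $x\in\mathcal{D}$. Each $f_n$ is smooth and convex, so $\nabla f_n(x)$ is its unique subgradient at $x$ and, by the affine form of Definition \ref{C-defn 2}, satisfies
\[
f_n(z)\geq f_n(x)+\langle \nabla f_n(x),\, z-x\rangle\qquad\forall z\in\mathbb{R}^d .
\]
Suppose for the moment that the sequence $\{\nabla f_n(x)\}_{n\geq1}$ is bounded. Then any subsequence admits a further subsequence with $\nabla f_{n_k}(x)\to v$ for some $v\in\mathbb{R}^d$; letting $k\to\infty$ in the displayed inequality and using the pointwise convergence $f_{n_k}(z)\to f(z)$ and $f_{n_k}(x)\to f(x)$ gives
\[
f(z)\geq f(x)+\langle v,\, z-x\rangle\qquad\forall z\in\mathbb{R}^d ,
\]
so that $v$ is a subgradient of $f$ at $x$, i.e.\ $v\in\partial f(x)$. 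But $x\in\mathcal{D}$ means $f$ has a unique supporting hyperplane at $x$, so $\partial f(x)=\{\nabla f(x)\}$ and therefore $v=\nabla f(x)$. Every subsequence of $\{\nabla f_n(x)\}$ thus has a further subsequence converging to the single limit $\nabla f(x)$, which forces convergence of the whole sequence and yields \eqref{c5}.

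The one ingredient still to be supplied — and the step I expect to require the most care — is the \emph{a priori} boundedness of $\{\nabla f_n(x)\}$. I would establish this by coordinate-wise directional estimates: applying the subgradient inequality with $z=x\pm t e_i$ for a fixed small $t>0$ and a basis $\{e_1,\dots,e_d\}$ gives
\[
\langle \nabla f_n(x),\, e_i\rangle\leq \frac{f_n(x+te_i)-f_n(x)}{t},\qquad
\langle \nabla f_n(x),\, -e_i\rangle\leq \frac{f_n(x-te_i)-f_n(x)}{t}.
\]
For each fixed $i$ and $t$ the right-hand sides converge, as $n\to\infty$, to the finite quantities $\bigl(f(x\pm te_i)-f(x)\bigr)/t$, and a convergent sequence is bounded, so each component $\partial f_n/\partial x_i(x)$ is bounded above and below uniformly in $n$; hence $\{\nabla f_n(x)\}$ is bounded. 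Equivalently, one may invoke the classical fact that pointwise convergence of finite convex functions is uniform on compact sets, which yields a uniform local Lipschitz bound and hence the same conclusion.

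The crux is therefore the interplay of two soft facts: gradients of the approximants are subgradients, so their limit points are trapped inside $\partial f(x)$; and differentiability at $x$ collapses $\partial f(x)$ to a single point. The only genuinely quantitative input is the uniform bound above, which is needed purely to guarantee that convergent subsequences exist before the limit-passing argument can be run.
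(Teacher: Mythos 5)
Your proof is correct. Note that the paper offers no argument for this statement at all --- it is imported verbatim from Rockafellar \cite[Thm.~25.7]{Rock} --- so what you have written is a complete, self-contained proof of a fact the paper treats as a black box. Your route is the standard soft argument of convex analysis: (i) for smooth convex $f_n$ the gradient is the subgradient, so the supporting-hyperplane inequality of Definition~\ref{C-defn 2} holds with $\beta=\nabla f_n(x)$; (ii) that inequality is stable under pointwise convergence, so every limit point of $\{\nabla f_n(x)\}$ lies in $\partial f(x)$; (iii) at $x\in\mathcal{D}$ the subdifferential collapses to $\{\nabla f(x)\}$; and (iv) the coordinate-wise difference-quotient bound supplies the compactness needed for the subsequence principle. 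Step (ii) is exactly the closedness property of the subdifferential that the paper itself invokes later (via \cite[Thm.~24.5.1]{Rock}) in the appendix proof of Lemma~\ref{C-lemma 9}, and your boundedness step (iv) parallels the paper's own derivation of inequality \eqref{c8} in Section~5, so your argument is very much in the spirit of the surrounding text even though the paper never proves this particular theorem. One further point in your favour: Rockafellar's Theorem~25.7 as literally stated assumes $f$ is differentiable on an open convex set and concludes uniform convergence on compacta, whereas the paper states (and uses) a pointwise claim at each individual differentiability point of a possibly non-differentiable $f$; your proof, being run at a fixed $x\in\mathcal{D}$, establishes precisely this pointwise variant, which is the form actually needed in Lemma~\ref{C-lemma 8}.
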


\begin{proof}
See proof of \cite[Thm. 25.7]{rockafellar}.
\end{proof}

\noindent{This result will be used several times in Sections 5 and 6.}

We next state and prove a result concerning convergence of subgradients of convex functions. Let $f$ be convex; consider a sequence $\{x_n\}_{n\geq 1}$ with $x_n\in \text{int}(\text{dom}f)$, $n\geq 1$, and $x\in \text{int}(\text{dom}f)$ such that $\lim_{n\rightarrow \infty} x_n=x$. Of course in general $\lim_{n\rightarrow \infty}\overline{\nabla} f(x_n)$ need not exist. However, the situation when $x_n=x+\epsilon_n y$ for some $y\in\mathbb{R}^d$ and $\epsilon_n\rightarrow 0$ as $n\rightarrow\infty$, i.e. when $x_n$ approaches $x$ from a single direction $y$, is special. In this case it is known that $\overline{\nabla}f(x_n)$ converges to the part of the boundary of $\partial f(x)$ consisting of points at which $y$ is normal to $\partial f(x)$ \cite[Thm. 24.6]{rockafellar}. Moreover,
\begin{theorem}\label{C-lemma 9}
 Let $f:\mathbb{R}^d\rightarrow\mathbb{R}$ be a convex function. For any $x\in\mathbb{R}^d$, for almost all $y\in S^{d-1}$, where $S^{d-1}$ is the unit sphere in $\mathbb{R}^d$,
\begin{equation*}
\lim_{\epsilon\downarrow 0} \overline{\nabla}f(x+\epsilon y)
\end{equation*}

\noindent{exists, belongs to $\partial f(x)$ and is unique for any selection $\overline{\nabla}f(x+\epsilon y)\in \partial f(x+\epsilon y)$ we may make from the subdifferential of $f$ at $x+\epsilon y$ for any $\epsilon>0$.}
\end{theorem}

\begin{proof} First of all recall that $Df(x)[y]=\lim_{\epsilon\downarrow 0}(f(x+\epsilon y)-f(x))/\epsilon$ is a positively homogeneous function, convex in $y$ with $Df(x)[0]=0$. Let $g(y):= Df(x)[y]$. Hence $\nabla g(\lambda y)$ exists and is unique for all $\lambda>0$ for almost all $y\in\mathbb{R}^d$. Fix $x,y\in \mathbb{R}^d$ and without loss of generality, by adding a suitable affine function to $f$, assume that
\begin{equation*}
f(x)=g(y)=\nabla g(y)=0 \ .
\end{equation*}

We argue by contradiction. If theorem fails then we can find a subsequence $\epsilon_n\rightarrow 0$ and a selection $\overline{\nabla}f(x+\epsilon_ny)\in\partial f(x+\epsilon_ny)$ such that
\begin{equation}\label{c15}
\lim_{n\rightarrow\infty}\overline{\nabla}f(x+\epsilon_ny)=h\ne 0 \ ,
\end{equation}

\noindent{and also a vector $u\in\mathbb{R}^d$ with $\langle h,u\rangle>0$. For such $u$ consider}
\begin{equation*}
\frac{f(x+\epsilon_ny+\epsilon_n\lambda u)-f(x+\epsilon_n y)}{\epsilon_n}
=\lambda\frac{f(x+\epsilon_ny+\epsilon_n\lambda u)-f(x+\epsilon_n y)}{\epsilon_n\lambda} \ .
\end{equation*}

\noindent{Using \eqref{c2} and homogeneity of $g(y)$ the above is greater or equal to}
\begin{equation*}
\frac{\lambda}{\epsilon_n}Df(x+\epsilon_ny)[\epsilon_nu]=\lambda Df(x+\epsilon_ny)[u]
\geq \lambda\langle \overline{\nabla}f(x+\epsilon_n y), u\rangle
=\lambda\langle h, u\rangle + o(1)
\end{equation*}

\noindent{where the last two inequality signs come from expressions \eqref{c4} and \eqref{c15} respectively, and  $o(1)\rightarrow 0$ as $n\rightarrow\infty$. Thus we obtain}
\begin{equation}\label{c16}
\frac{f(x+\epsilon_ny+\epsilon_n\lambda u)-f(x+\epsilon_n y)}{\epsilon_n}\geq \lambda\langle h, u\rangle + o(1) \ ,
\end{equation}

\noindent{On the other hand, since $f(x)=g(y)=0$, we have}
\begin{equation}\label{c17}
\frac{f(x+\epsilon_n y)-f(x)}{\epsilon_n}=\frac{f(x+\epsilon_ny)}{\epsilon_n}=o(1) \ .
\end{equation}

\noindent{Hence combining \eqref{c16} and \eqref{c17} one obtains}
\begin{gather*}\frac{f(x+\epsilon_ny+\epsilon_n\lambda u)-f(x)}{\epsilon_n}
=\frac{f(x+\epsilon_ny+\epsilon_n\lambda u)-f(x+\epsilon_ny)}{\epsilon_n}
+\frac{f(x+\epsilon_ny)-f(x)}{\epsilon_n}\cr
\geq \lambda\langle h,u\rangle +o(1) \ .
\end{gather*}

\noindent{Letting $n\rightarrow \infty$, i.e. $\epsilon_n\rightarrow 0$, the above inequality becomes}
\begin{gather*}
Df(x)[y+\lambda u]=g(y+\lambda u)\geq \lambda\langle h,u\rangle>0\cr
\Rightarrow\quad \frac{g(y+\lambda u)}{\lambda}=\frac{g(y+\lambda u)-g(y)}{\lambda}\geq \langle h,u \rangle>0 \ .
\end{gather*}

\noindent{And so letting $\lambda\rightarrow 0$ one obtains}
\begin{equation*}
\langle \nabla g(y), u \rangle\geq \langle h,u \rangle>0 \ .
\end{equation*}

\noindent{But this contradicts the assumption that $\nabla g(y)=0$.}

\end{proof}

Finally we equip the set of convex functions on $\mathbb{R}^d$ with the topology of uniform convergence on compact sets with the corresponding metric $\rho$, defined by $\rho(f, g) = \sum_{k=1}^\infty 2^{-k}\rho_k(f,g)$ where

\begin{equation*}
\rho_k(f,g)=\frac{ \sup_{\vert x\vert\leq k}\vert f(x) - g(x)\vert}{1+\sup_{\vert x\vert\leq k}\vert f(x) - g(x)\vert} \ .
\end{equation*}

\noindent{In Section 5 we will consider an approximating sequence $\{f_n\}_{n\geq 1}$ of twice continuously differentiable convex functions approximating a general convex function $f$, such that $\lim_{n\rightarrow\infty}\rho(f_n, f)=0$. We will need the following lemma (partly adapted from \cite[Lemma, p. 2]{Carlen.Protter.92})}

\begin{lemma}\label{C-lemma: inequalities} Let $\{f_n\}_{n\geq 1}$ be a sequence of $\mathcal{C}^2$ convex functions on $\mathbb{R}^d$ and let $f$ be a convex function on $\mathbb{R}^d$, such that $\lim_{n\rightarrow\infty}\rho(f_n, f)=0$. Then for any constant $r\geq 0$
\begin{equation}\label{c8}
\sup_n\sup_{\vert x\vert \leq r}\vert \nabla f_n(x) \vert\leq C_r<\infty, \quad \forall r>0\ ,
\end{equation}

\noindent{and}
\begin{equation}\label{c9}
\sup_{\vert x\vert \leq r}\vert\overline {\nabla}f(x)\vert\leq C_r<\infty, \quad \forall r>0 \ ,
\end{equation}

\noindent{where $C_r$ is some constant only depending on $r$, and $\overline{\nabla}f(x)$ is any choice of subgradient $\partial f(x)$.}

\end{lemma}

\begin{proof}
To see why inequality \eqref{c8} is true, first notice that, since $\lim_{n\rightarrow\infty} \rho(f_n,f)=0$, the variation of the convex functions $f_n$ is uniformly bounded in $n$ on $\{\vert x\vert \leq r+1\}$ for any $r>0$. Denote this bound by $C_r$. Let $x_n$ be such that
\begin{equation*}
\nabla f_n(x_n)=\sup_{\vert x\vert \leq r}\vert \nabla f_n(x)\vert
\end{equation*}

\noindent{and let $u_n:=\nabla f_n(x_n)/\vert\nabla f_n(x_n)\vert$. Then}
\begin{multline}\label{c19}
\vert\nabla f_n(x_n)\vert=\langle \nabla f_n(x_n),\frac{\nabla f_n(x_n)}{\vert\nabla f_n(x_n)\vert}\rangle=\langle \nabla f_n(x), u_n\rangle=Df_n(x)[u_n]\\
=\inf_{\lambda>0}\frac{f_n(x_n+\lambda u_n)-f_n(x_n)}{\lambda
} \leq f_n(x_n+u_n)-f_n(x_n) \ .
\end{multline}

\noindent{But, since $\vert x_n+u_n\vert\leq r+1$, the above is less than or equal to $C_r$ for all $n$ and \eqref{c8} follows.}\par\smallskip

Now, since $f_n$ converges to $f$ uniformly on compact sets, we also have $f_n\rightarrow f$ pointwise. Therefore, for any $x, y$ with $\vert x\vert, \vert y\vert<r+1$ the inequality $f_n(x)-f_n(y)\leq C_r$, $\forall n\geq 1$, (which follows since $C_r$ bounds the variation of $f_n$'s) implies $f(x)-f(y)\leq C_r$ by virtue of taking the limit $n\rightarrow\infty$. So, by a calculation similar to \eqref{c19}, we have for any $\overline{\nabla}f(x)\in\partial f(x)$
\begin{equation*}
\vert\overline{\nabla} f(x^*)\vert=\langle \overline{\nabla} f(x^*), u^*\rangle\leq Df(x^*)[u^*]\leq f(x^*+u^*)-f(x^*)\leq C_r \ ,
\end{equation*}
\noindent{where $x^*$ is such that $\overline{\nabla} f(x^*)=\sup_{\vert x\vert \leq r}\vert \overline{\nabla} f(x)\vert$ and $u^*:=\overline{\nabla} f(x^*)/\vert\overline{\nabla} f(x^*)\vert$.}
\end{proof}

\bigskip\bigskip
\section{Piecewise linear convex functions and Meyer-Tanaka formula}

In this section we start our analysis of the martingale part of $f(X)$. However, instead of treating the case of a general convex function $f$, we first prove Theorem 1 in a special case when $f$ is piecewise linear. Using the Meyer-Tanaka formula, we will verify that any piecewise linear convex function of a continuous semimartingale is itself a continuous semimartingale and find the martingale part of the decomposition explicitly.

This result, although not essential, is a nice warm-up before we start dealing with a more general situation in the sections to follow. We refer reader to \cite[Ch. VI.1]{revuz.yor} for a detailed discussion of classical Tanaka and It\^{o}-Tanaka formulas (for $d=1$). One might also find a discussion of convex functions in \cite[Appendix \S 3]{revuz.yor} useful.

We first recall the {\it Meyer-Tanaka formula} (Tanaka formula, if $X=B$ is a standard Brownian motion):


\begin{theorem} (Meyer-Tanaka formula for continuous semimartingales) Let $X$ be a continuous semimartingale. Define the function $\hbox{sgn}(x)$ to be $-1$ if $x\leq 0$ and $1$ otherwise. Then $f(X)$, where $f(x)=\vert x\vert$, is again a semimartingale and, in particular,

\begin{equation*}
\vert X_t\vert =\vert X_0\vert + \int_0^t \hbox{sgn}(X_s)dX_s + L^0_t\ ,
\end{equation*}

\noindent{where $L^0_t$ is the local time of $X$ at $0$.}
\end{theorem}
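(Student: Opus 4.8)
The plan is to prove the Meyer-Tanaka formula by smoothing the absolute value function and passing to the limit, which dovetails with the general strategy announced for Theorem \ref{C-theorem 1}. First I would introduce a family of smooth convex approximations $f_n$ to $f(x)=|x|$, for instance $f_n(x)=\sqrt{x^2+1/n}$ or a mollification of $|x|$, chosen so that $f_n\to f$ uniformly on compacts, $f_n'(x)\to\mathrm{sgn}(x)$ pointwise away from the origin, and $f_n''\geq 0$ with $f_n''$ acting as an approximate identity concentrated near $0$. Applying the classical It\^{o} formula to each $f_n$ gives
\begin{equation*}
f_n(X_t)=f_n(X_0)+\int_0^t f_n'(X_s)\,dX_s+\tfrac{1}{2}\int_0^t f_n''(X_s)\,d\langle X\rangle_s .
\end{equation*}

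Next I would pass to the limit term by term. The left side converges to $|X_t|$ by uniform convergence of $f_n$ to $|\cdot|$ on the (random but finite) range of $X$ over $[0,t]$. For the stochastic integral, since $f_n'$ is uniformly bounded and converges to $\mathrm{sgn}$ off the origin, and since a continuous semimartingale spends zero quadratic-variation time at any fixed level except through its local time, I would invoke a dominated-convergence theorem for stochastic integrals to obtain $\int_0^t f_n'(X_s)\,dX_s\to\int_0^t \mathrm{sgn}(X_s)\,dX_s$ in probability (uniformly on compacts). The remaining finite-variation term is then \emph{forced} to converge, and I would identify its limit as the local time: the occupation-times formula shows $\tfrac12\int_0^t f_n''(X_s)\,d\langle X\rangle_s=\tfrac12\int_{\mathbb{R}}f_n''(a)L^a_t\,da$, and since $\tfrac12 f_n''$ is an approximate identity at $0$ while $a\mapsto L^a_t$ is known to be right-continuous in $a$, this integral converges to $L^0_t$.

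The main obstacle I anticipate is the convergence of the stochastic integral, precisely at the origin where $f_n'$ does not converge to $\mathrm{sgn}$ uniformly. The subtlety is that $\mathrm{sgn}$ has a jump at $0$, so one cannot simply cite continuity; instead one must control the contribution of the set $\{s\leq t:|X_s|\leq\delta\}$. I would handle this by noting that $\mathbb{E}\!\left[\int_0^t \mathbf{1}_{\{|X_s|\leq\delta\}}\,d\langle M\rangle_s\right]=\mathbb{E}\!\left[\int_{-\delta}^{\delta}L^a_t\,da\right]\to 0$ as $\delta\downarrow 0$, which tames the martingale part, while the finite-variation part of $X$ contributes negligibly since $|f_n'|\leq 1$. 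Combining these bounds with the $L^2$ isometry for the martingale piece yields the required convergence. Once existence of the limit on the right-hand side is established and the local-time term identified, the equality
\begin{equation*}
|X_t|=|X_0|+\int_0^t \mathrm{sgn}(X_s)\,dX_s+L^0_t
\end{equation*}
follows, and the semimartingale property of $|X|$ is immediate since the right-hand side exhibits an explicit decomposition into a continuous local martingale part $\int_0^t\mathrm{sgn}(X_s)\,dM_s$ and a continuous finite-variation part $\int_0^t\mathrm{sgn}(X_s)\,dA_s+L^0_t$.
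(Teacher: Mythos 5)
The paper itself does not prove this theorem: it invokes the Meyer--Tanaka formula as a classical result (pointing the reader to \cite[Ch. VI.1]{ry}) and only uses it as input for the Proposition on piecewise linear functions, so your proposal must be judged on its own correctness. There it has a genuine gap: the symmetric smoothing $f_n(x)=\sqrt{x^2+1/n}$ is incompatible with the stated conventions, namely $\mathrm{sgn}(0)=-1$ together with $L^0_t$ the usual (right-continuous in the space variable) local time. Your occupation-time bound does tame the martingale part, but your claim that the finite-variation part ``contributes negligibly since $|f_n'|\leq 1$'' is false: $\int_0^t\mathbf{1}_{\{|X_s|\leq\delta\}}\,|dA_s|$ decreases to $\int_0^t\mathbf{1}_{\{X_s=0\}}\,|dA_s|$, which need not vanish for a general continuous semimartingale. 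Concretely, take $X=B^+$ for a standard Brownian motion $B$, so that $X_t=\int_0^t\mathbf{1}_{\{B_s>0\}}\,dB_s+\tfrac12 L^0_t(B)$ and $dA_s=\tfrac12\,dL^0_s(B)$ is carried by $\{X_s=0\}$. Since $f_n'(0)=0$, one gets $\lim_n\int_0^t f_n'(X_s)\,dX_s=B_t^+-\tfrac12 L^0_t(B)$, whereas $\int_0^t\mathrm{sgn}(X_s)\,dX_s=B_t^+-L^0_t(B)$: the convergence of stochastic integrals you assert fails here. The companion step has the matching flaw: a \emph{symmetric} approximate identity integrated against the merely c\`adl\`ag map $a\mapsto L^a_t$ converges to $\tfrac12(L^0_t+L^{0-}_t)$, not to $L^0_t$, and these differ exactly by $\int_0^t\mathbf{1}_{\{X_s=0\}}\,dA_s$ (the jump of local time in the space variable, \cite[Thm. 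VI.1.7]{ry}). The two errors cancel, so what your argument actually establishes is the \emph{symmetric} Tanaka formula (with $\mathrm{sgn}(0)=0$ and symmetric local time) --- a true statement, but not the one asserted.

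The repair is to make the smoothing one-sided, so that the approximation respects the left-derivative convention built into $\mathrm{sgn}$: take $f_n=|\cdot|*\rho_n$ with $\rho_n\geq 0$, $\int\rho_n=1$, supported in $(0,1/n)$. Then $f_n'(x)\to\mathrm{sgn}(x)$ for \emph{every} $x$, including $x=0$, with $|f_n'|\leq 1$, so dominated convergence applies separately to the $d\langle M\rangle$-integral and to the $|dA|$-integral with no need to excise a neighbourhood of the origin; moreover $\tfrac12 f_n''=\rho_n$ concentrates just to the \emph{right} of $0$, so $\tfrac12\int f_n''(a)L^a_t\,da\to L^{0+}_t=L^0_t$, which is precisely where the right-continuity you invoke is legitimately used. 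One further caveat: the occupation-times formula and the existence of a c\`adl\`ag-in-$a$ version of $L^a_t$ are themselves normally established \emph{via} Tanaka's formula, so unless you define local time independently (e.g.\ as occupation density), the cleaner logic is to define $L^0_t$ as the limit of $\tfrac12\int_0^t f_n''(X_s)\,d\langle X\rangle_s$ (which exists because the other two terms converge) and then verify that it is increasing and carried by $\{X_s=0\}$, rather than to presuppose its properties.
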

\par\smallskip

\noindent{Here, by extending the classic Meyer-Tanaka formula, we prove a more general result. Namely, we will prove that any piecewise linear convex function of a continuous semimartingale is itself a continuous semimartingale and find the martingale part of the decomposition explicitly.}\smallskip


\begin{prop} Let $X=(X^1,..., X^d)$ be a continuous semimartingale living on $\mathbb{R}^d$, with $i^{th}$ component having decomposition $X^i_t=X_0^i+M^i_t+A^i_t$, $i\in\{1,...,d\}$. Let $f:\mathbb{R}^d\rightarrow \mathbb{R}$ be a function defined by $f(x)=l_1(x)\vee...\vee l_k(x)$, $x\in\mathbb{R}^d$, where $l_i(x)=\alpha_i+\sum_{j=1}^d\beta_{ij}x_j=\alpha_i+\beta_i^Tx$, for $\alpha_i, \beta_i\in\mathbb{R}^d$, $i\in\{1,...,k\}$, and $x\vee y:=\sup\{x,y\}$. Then $f(X)$ is a semimartingale with decomposition
\begin{equation}\label{c6}
f(X_t)=f(X_0)+\sum_{i=1}^k\int_0^t {\bf 1}_{B_i}(X_s)\beta_i^TdX_s + \frac{1}{2}L_t \ ,
\end{equation}

\noindent{where $B_i= \{x: \min\{k: \sup_j\{l_j(x)\}=l_k(x) \}=i \}$ and $L_t$ is an increasing process, constant on the complement of $\{t: l_i(X_t)=l_j(X_t) \hbox{ for any } i\ne j\}$. In particular, the local martingale part of $f(X)$ is given by}
\begin{equation}\label{c7}
\sum_{i=1}^k \int_0^t {\bf 1}_{B_i}(X_s)\beta_i^TdM_s \ .
\end{equation}

\end{prop}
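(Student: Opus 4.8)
The plan is to prove the statement by induction on the number $k$ of affine pieces, using the elementary identity $a\vee b=\tfrac12(a+b)+\tfrac12|a-b|$ to reduce each step to the Meyer--Tanaka formula just established. For $k=1$ the function $f=l_1$ is affine, so $f(X_t)=\alpha_1+\beta_1^TX_t$ is trivially a semimartingale with martingale part $\int_0^t\beta_1^T\,dM_s$ and no extra term, i.e.\ \eqref{c6}--\eqref{c7} hold with $L\equiv0$ and $B_1=\mathbb{R}^d$. For the inductive step I write $f=g\vee l_k$ with $g:=l_1\vee\cdots\vee l_{k-1}$, assume the claim for $g$, and set $Y:=g(X)-l_k(X)$, which is a continuous semimartingale by the induction hypothesis together with the fact that $l_k(X)$ is one.

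Applying the Meyer--Tanaka formula to $Y$ gives $|Y_t|=|Y_0|+\int_0^t\mathrm{sgn}(Y_s)\,dY_s+L_t^0(Y)$, and hence, using $\tfrac12(1+\mathrm{sgn})=\mathbf{1}_{\{Y>0\}}$ and $dl_k(X)=\beta_k^T\,dX$,
\begin{align*}
f(X_t)&=\tfrac12\big(g(X_t)+l_k(X_t)\big)+\tfrac12|Y_t|\\
&=f(X_0)+\int_0^t\mathbf{1}_{\{Y_s>0\}}\,dg(X_s)+\int_0^t\mathbf{1}_{\{Y_s\leq0\}}\beta_k^T\,dX_s+\tfrac12L_t^0(Y).
\end{align*}
Substituting the inductive decomposition of $g(X)$ and collecting the $dM$--terms, the local martingale part of $f(X)$ becomes $\int_0^t\mathbf{1}_{\{g>l_k\}}\sum_{i<k}\mathbf{1}_{B_i^{(k-1)}}\beta_i^T\,dM_s+\int_0^t\mathbf{1}_{\{g\leq l_k\}}\beta_k^T\,dM_s$, where $B_i^{(k-1)}$ denote the selection sets attached to $g$. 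It then remains to match this against $\sum_{i\leq k}\int_0^t\mathbf{1}_{B_i}\beta_i^T\,dM_s$.

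The main obstacle is the tie-breaking on the creases. The indicators produced by the induction involve the strict inequalities $\{g>l_k\}$ and $\{g\leq l_k\}$, whereas the $B_i$ in the statement resolve ties by smallest index, so the two families differ exactly on the hyperplane pieces $\{l_i=l_k\}$. I would dispose of this by showing that every stochastic integral against $dM$ carried by such a crease vanishes. Fix $\gamma:=\beta_i-\beta_k$ and $c:=\alpha_k-\alpha_i$; on $\{l_i=l_k\}$ the one-dimensional semimartingale $Z:=\gamma^TX$ equals $c$, and the integral in question is $\int_0^t\mathbf{1}_{\{Z_s=c\}}\,d(\gamma^TM)_s$. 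Its quadratic variation is $\int_0^t\mathbf{1}_{\{Z_s=c\}}\,d\langle Z\rangle_s=\int_{\mathbb{R}}\mathbf{1}_{\{a=c\}}L_t^a(Z)\,da=0$ by the occupation times formula, so, being a continuous local martingale null at zero with vanishing quadratic variation, it is identically zero. Consequently the crease discrepancies drop out of the martingale part, the strict indicators may be replaced by $\mathbf{1}_{B_i}$ and $\mathbf{1}_{B_k}$, and \eqref{c7} follows.

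Finally, to obtain \eqref{c6} I read off the remaining finite-variation process as $\tfrac12L_t:=\big(\text{finite variation part of }f(X)\big)-\sum_i\int_0^t\mathbf{1}_{B_i}\beta_i^T\,dA_s$. Off the tie set the path lies in the interior of some $B_i$, where $f$ agrees locally with the single affine map $l_i$, so $f(X)$ contributes no finite variation beyond $\beta_i^T\,dA$ and $L$ is constant there; this gives the stated support property. That $L$ is genuinely increasing is where convexity of $f$ enters: each Meyer--Tanaka step contributes a nonnegative local time $L^0(Y)$ supported on $\{Y_s=0\}\subseteq\bigcup_{i\neq j}\{l_i(X_s)=l_j(X_s)\}$, and the residual crease mass is nonnegative because $f$ bends upward across each crease (the multidimensional analogue of the nonnegative second-derivative measure). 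I expect the most delicate bookkeeping to be precisely this verification that no finite-variation contribution of the wrong sign survives on the creases after the indicator replacement, which I would again control through the support of the local times and the occupation-times identity applied to the finite-variation component.
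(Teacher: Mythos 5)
Your proposal is correct and follows essentially the same route as the paper: both reduce the maximum of affine pieces to the two-piece case via the identity $a\vee b=\tfrac12\left(a+b+|a-b|\right)$, apply the Meyer--Tanaka formula to the difference process, and induct on $k$ (the paper proves $k=2$ for two semimartingales and appeals to associativity of $\vee$; you carry out the same induction explicitly with $f=g\vee l_k$). The one point where you go beyond the paper is the tie-breaking bookkeeping: the paper derives the decomposition with indicators ${\bf 1}_{\{Y>Z\}}$, ${\bf 1}_{\{Y\leq Z\}}$ and silently identifies it with the statement's sets $B_1=\{l_1\geq l_2\}$, $B_2=\{l_1<l_2\}$, whereas your occupation-times/zero-quadratic-variation argument proves rigorously that the discrepancy supported on the crease $\{l_i=l_k\}$ contributes nothing to the local martingale part, which is precisely the part of the claim \eqref{c7} that Theorem \ref{C-theorem 1} needs.
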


\par\medskip

\begin{proof}
We prove the proposition for the case when $k=2$ and any $d\geq 1$ and the general case follows by induction. Consider $f(x)=l_1(x)\vee l_2(x)$. Denote $l_1(X_t)=Y_t$ and $l_2(X_t)=Z_t$. Since $X_t$ is a continuous semimartingale so are affine functionals, $Y_t$ and $Z_t$, of $X_t$. Let the corresponding decompositions be $Y=M+A$ and $Z=N+S$. Consider $f(x)=l_1(x)\vee l_2(x)=y\vee z$. We can rewrite $y\vee z$ as follows
\begin{equation*}
y\vee z=\frac{1}{2}\left(\vert y - z\vert + y + z\right) \ .
\end{equation*}

\noindent{Hence, using the differential notation for simplicity, we obtain}
\begin{equation*}
d(Y_t\vee Z_t) =\frac{1}{2}d \left(\vert Y_t-Z_t\vert + Y_t + Z_t\right)=\frac{1}{2}\left(d( \vert W_t\vert) + dY_t +
d Z_t\right) \ ,
\end{equation*}

\noindent{where $W:=Y-Z$, and so $W=(M-N)+(A-S)$. Using Meyer-Tanaka formula the above becomes}
\begin{equation*}
\frac{1}{2}\left(\hbox{sgn}(W_t)dW_t + dL_t^0 + dY_t + d Z_t\right) \ ,
\end{equation*}

\noindent{where $L_t^0$ is the local time of $W$ at $0$. Next}
\begin{multline*}\frac{1}{2}\left(\hbox{sgn}(W_t)d(M_t-N_t) + \hbox{sgn}(W_t)d(A_t-S_t)+d(M_t+A_t)+d(N_t+S_t)+dL^0_t\right)=\\
 =\frac{1}{2}\big[\big(\hbox{sgn}(W_t)+1\big)dM_t-\big(\hbox{sgn}(W_t)-1\big)dN_t+\\
+\left(\hbox{sgn}(W_t)+1\right)dA_t-\left(\hbox{sgn}(W_t)-1\right)dS_t+dL_t^0\big] \ .
\end{multline*}

\noindent{Now $\hbox{sgn}(W_t)=\hbox{sgn}(Y_t-Z_t)={\bf 1}_{[Y_t>Z_t]}-{\bf 1}_{[Y_t\leq Z_t]}$ and so $\hbox{sgn}(W_t)+1=2{\bf 1}_{[Y_t>Z_t]}$ and
$\hbox{sgn}(W_t)-1=-2{\bf 1}_{[Y_t\leq Z_t]}$. Hence we obtain}
\begin{multline*}
d\left(Y_t\vee Z_t\right)={\bf 1}_{[Y_t> Z_t]}dM_t+{\bf 1}_{[Y_t\leq Z_t]}dN_t+{\bf 1}_{[Y_t> Z_t]}dA_t+{\bf 1}_{[Y_t\leq Z_t]}dS_t + \frac{1}{2}dL_t=\\
={\bf 1}_{[Y_t> Z_t]}dY_t+{\bf 1}_{[Y_t\leq Z_t]}dZ_t + \frac{1}{2}dL_t\\
\end{multline*}

\noindent{or}
\begin{equation*}
Y_t\vee Z_t=Y_0\vee Z_0 + \int_0^t {\bf 1}_{[Y_s> Z_s]}dY_s+\int_0^t{\bf 1}_{[Y_s\leq Z_s]}dZ_s+\frac{1}{2}dL_t \ ,
\end{equation*}

\noindent{where $L_t$ is a continuous increasing process, constant on the complement of $\{t: l_1(X_t)=l_2(X_t)\}$. The above expression is exactly \eqref{c6} for $n=2$. Noticing that $x\vee y\vee z=(x\vee y)\vee z$, the general case follows by induction.}

\end{proof}
\par\medskip


Clearly the integrand in \eqref{c7} is a measurable selection of the multivalued map $\partial f(x)$ and so Theorem \ref{C-theorem 1} holds in the special case of convex piecewise linear functions. To illustrate this result we consider our simple example again: for $f(x)=\vert x\vert$ we have $d=1$, $k=2$, $l_1(x)=-x$ and $l_2(x)=x$ and so $B_1=\{x: x<0\}$, $B_2=\{x: x\geq 0\}$ and $L_t$ is an increasing process constant on the complement of $\{t: X_t=0\}$.\smallskip


\section{ Semimartingale decomposition of $f(\widetilde{X}_t)$}

We are now ready to start the analysis of the general case of a convex function $f$ defined over the whole of the Euclidean space $\mathbb{R}^d$. Let $X$ be a continuous semimartingale in $\mathbb{R}^d$ with decomposition $X=M+A$ and defined on some filtered probability space $(\Omega, \mathcal{F}, \{\mathcal{F}_t\}_{t\geq 0}, \mathbb{P})$. Let $(\widetilde{\Omega}, \mathcal{\widetilde{F}}, \{\widetilde{\mathcal{F}}_t\}_{t\geq 0}, \widetilde{\mathbb{P}})$ be some enlargement of this space and let $B$ be an $(\widetilde{\mathcal{F}}_t)$-standard Brownian motion independent of $X$. Define the {\it perturbed process} $\widetilde{X}$ on $(\widetilde{\Omega}, \mathcal{\widetilde{F}}, \{\widetilde{\mathcal{F}}_t\}_{t\geq 0},\widetilde{\mathbb{P}})$ by
\begin{equation*}
\widetilde{X}_t^{(\epsilon)}:=\widetilde{X}_t:=X_t+\epsilon B_t, \qquad \epsilon>0, \quad t\geq 0 \ .
\end{equation*}

\noindent{For simplicity of notation we shall suppress the superscript $(\epsilon)$ wherever possible. For simplicity also but without loss of generality we can assume that $X_0=\widetilde{X}_0=0$.}

In this section we find the martingale part of $f(\widetilde{X}^{(\epsilon)})$ explicitly in order to take the limit as $\epsilon\rightarrow 0$ in the next section and hence prove Theorem \ref{C-theorem 1}.
The reasoning behind adding a small amount of Brownian motion to $X$ is as follows: we know very little about the behaviour of $X$ as it is a general semimartingale. For instance, it can at some times be trivial, i.e. constant. Hence, it might spend positive amount of time in the points where $f$ is not differentiable, that is, where it has more than one supporting hyperplane. To avoid this happening we perturb $X$ by adding $\epsilon B$. Then\smallskip


\begin{lemma}\label{lemma - density of the perturbed process}
$\widetilde{X}_t$ has a probability density at each $t>0$ and, in particular, spends zero time in any null set.
\end{lemma}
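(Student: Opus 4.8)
The plan is to exploit the independence of $B$ and $X$ together with the smoothing property of Gaussian convolution. First I would let $\mu_t$ denote the law of $X_t$ on $\mathbb{R}^d$ and observe that, since $\epsilon B_t$ is independent of $X_t$ and has the (for $t>0$ non-degenerate) Gaussian law $N(0,\epsilon^2 t\,I_d)$ with density
\[
g_t(u)=\frac{1}{(2\pi\epsilon^2 t)^{d/2}}\exp\!\Big(-\frac{|u|^2}{2\epsilon^2 t}\Big),
\]
the law of $\widetilde{X}_t=X_t+\epsilon B_t$ is the convolution $\mu_t * g_t$. The key elementary fact is that the convolution of an arbitrary probability measure with an absolutely continuous one is again absolutely continuous: explicitly, for every Borel set $C\subseteq\mathbb{R}^d$,
\[
\widetilde{\mathbb{P}}(\widetilde{X}_t\in C)=\int_{\mathbb{R}^d}\Big(\int_C g_t(z-x)\,dz\Big)\,\mu_t(dx)=\int_C \mathbb{E}\big[g_t(z-X_t)\big]\,dz,
\]
so that $\widetilde{X}_t$ has the density $p_t(z)=\mathbb{E}[g_t(z-X_t)]$ for each $t>0$. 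The interchange of the two integrals is justified by Tonelli, the integrand being nonnegative.

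For the occupation-time statement I would fix any Lebesgue-null Borel set $N\subseteq\mathbb{R}^d$ and bound the expected time $\widetilde{X}$ spends in $N$ on a finite horizon $[0,T]$. Since $t\mapsto\widetilde{X}_t$ is continuous, the map $(t,\omega)\mapsto \mathbf{1}_N(\widetilde{X}_t(\omega))$ is jointly measurable, so Tonelli gives
\[
\widetilde{\mathbb{E}}\Big[\int_0^T \mathbf{1}_N(\widetilde{X}_t)\,dt\Big]=\int_0^T \widetilde{\mathbb{P}}(\widetilde{X}_t\in N)\,dt=\int_0^T\Big(\int_N p_t(z)\,dz\Big)\,dt=0,
\]
because $\int_N p_t(z)\,dz=0$ for every $t>0$ once $|N|=0$. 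As the inner time integral is a nonnegative random variable with zero expectation, it vanishes $\widetilde{\mathbb{P}}$-almost surely; letting $T\to\infty$ along the integers shows $\widetilde{X}$ spends zero time in $N$ almost surely. A general Lebesgue-null set is contained in a Borel null set, so this covers the stated claim.

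I do not anticipate a serious obstacle here: the entire content is the Gaussian smoothing, and the only points requiring care are the exclusion of the degenerate instant $t=0$ (harmless, since a single time contributes nothing to the Lebesgue time integral) and the joint measurability needed to apply Tonelli, which follows from path-continuity of $\widetilde{X}$. The independence of $B$ from $X$, guaranteed by the construction on the enlarged space $(\widetilde{\Omega},\widetilde{\mathcal{F}}_t,\widetilde{\mathbb{P}})$, is precisely what turns the law of $\widetilde{X}_t$ into a clean convolution, and is the one hypothesis it is essential not to lose sight of.
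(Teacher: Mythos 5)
Your proposal is correct and follows essentially the same route as the paper: both exploit the independence of $B$ from $X$ to realise the law of $\widetilde{X}_t$ as the convolution of $\mu_t$ with a non-degenerate Gaussian (the paper phrases this via the tower property and the fact that Brownian motion at a fixed time avoids Lebesgue-null sets, you via the explicit convolution density $p_t(z)=\mathbb{E}[g_t(z-X_t)]$). If anything, your write-up is slightly more complete, since you also spell out the Tonelli argument for the occupation-time claim, which the paper treats as immediate from the existence of a density.
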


\begin{proof} It suffices to prove that $\widetilde{\mathbb{P}}(\widetilde{X}_t\in N)=0$ for any $t>0$ and $N\subset\mathbb{R}^d$ with $Leb(N)=0$. Then it will follow that for all $t> 0$ the law of $\widetilde{X}_t$ under $\widetilde{\mathbb{P}}$ is absolutely continuous with respect to the Lebesgue measure. For any Lebesgue-null set $N$ we have
\begin{equation*}
\widetilde{\mathbb{P}}(\widetilde{X}_t\in N)=\mathbb{E}\left[\widetilde{\mathbb{P}}(\widetilde{X}_t\in N\vert \mathcal{F}_t)\right] \ ,
\end{equation*}

\noindent{where $\mathcal{F}_t=\sigma(\{X_s; 0\leq s\leq t\})$, and we use the tower property of conditional expectation. Next we express $\widetilde{X}_t$ in terms of $X_t$ and $B_t$ and use the fact that $B_t$ is independent of $X_t$, and hence of $\mathcal{F}_t$, to obtain}
\begin{equation*}
\mathbb{E}\big[\widetilde{\mathbb{P}}(X_t+\epsilon B_t \in N\vert \mathcal{F}_t)\big]=\int\widetilde{\mathbb{P}}(x+\epsilon B_t \in N) d\mu_t(x) \ ,
\end{equation*}

\noindent{where $\mu_t$ is the law of $X_t$ (under $\mathbb{P}$). Observe that $\hat{B}_t:=x+\epsilon B_t$ is a Brownian motion started at $x$ with $\langle \hat{B}_t, \hat{B}_t\rangle =\epsilon^2 t$. But we know that Brownian motion hits null-sets with probability zero. Hence, the above integral is equal to zero and the lemma is proved.}\smallskip

\end{proof}

In Section 3 we have seen that $\mathcal{D}^c$, the set of points at which $f$ is not differentiable, is Lebesgue-null. Consequently, by the above lemma, $\widetilde{X}$ spends zero time at those $\lq\lq$ambiguous'' points. Hence, $\nabla f(\widetilde{X})$ is almost surely everywhere defined. Moreover, a particular measurable choice of $\overline{\nabla} f(x)\in\partial f(x)$ at each $x\in\mathcal{D}^c$ is unimportant as it does not change the value of the stochastic integral $\int_0^t \overline{\nabla}f(\widetilde{X}_s)d\widetilde{M}_s$, which we will show is the martingale part of $f(\widetilde{X})$. To do that we approximate $f$ by a sequence of convex \emph{twice continuously differentiable} functions.\par\smallskip

Let $\{f_n\}_{n\geq 1}$ be a sequence of such twice continuously differentiable convex functions on $\mathbb{R}^d$ converging to $f$ with respect to the metric $\rho$ described at the end of Section 3, i.e. $\lim_{n\rightarrow\infty}\rho(f_n,f)=0$. We need to prove that the stochastic integral $\int_0^t \nabla f_n(\widetilde{X}_s)d\widetilde{M}_s$, the martingale part of $f_n(\widetilde{X})$, converges in some sense to $\int_0^t \overline{\nabla} f(\widetilde{X}_s)d\widetilde{M}_s$ for some measurable choice of $\overline{\nabla}f(x)\in\partial f(x)$, and that it is indeed the martingale part of $f(\widetilde{X})$. It turns out that the convergence is in the $\mathcal{H}^1$ norm: for a continuous semimartingale $X$ with decomposition $X=M+A$ we define

\begin{equation*}
\parallel X\parallel_{\mathcal{H}^p}=\parallel \langle M, M\rangle_\infty^{1/2}+\int_0^\infty\vert dA_s\vert \parallel_{L^p} \ .
\end{equation*}

\noindent{The $\mathcal{H}^p$-space consists of all semimartingales $X$ such that $\parallel X\parallel_{\mathcal{H}^p}<\infty$. Once the convergence is established, the fact that $\int\overline{\nabla}f(\widetilde{X})d\widetilde{X}$ is a local martingale part of $f(\widetilde{X})$ will follow from \cite[Thm. 1]{Carlen.Protter.92} of Carlen and Protter.\smallskip

Suppose $\{X^n\}_{n\geq 1}$ is a sequence of continuous semimartingales with the decomposition
$X^n=X^n_0 + M^n + A^n$, such that $\lim_{n\rightarrow\infty}\mathbb{E}[(X^n-X)^*]=0$. Here $X^*=\sup_t\vert X_t\vert$.
Barlow and Protter prove (\cite[Thm. 1]{Barlow.Protter.90}) that under some regularity conditions imposed
on $M^n$ and $A^n$ not only that the limiting process $X$ is again a continuous semimartingale but that there is also convergence of the corresponding
martingale and finite variation process parts of the decompositions.

In \cite[Thm. 1]{Carlen.Protter.92} Carlen and Protter prove that the assumptions of \cite[Thm. 1]{Barlow.Protter.90} are satisfied in the case when the sequence of $\mathcal{C}^2$ convex functions $\{f_n\}_{n\geq 1}$ of a (not necessarily continuous) semimartingale ${X}=M+A$ converges to a convex $f$, thus making the result applicable in our situation.

We are now ready to prove the following


\begin{lemma}\label{C-lemma 8}
The local martingale part of $f(\widetilde{X}_t)$ is given by the limit
\begin{equation}\label{c18}
\lim_{n\rightarrow\infty}\int_0^t \nabla f_n(\widetilde{X}_s)d\widetilde{M}_s=\int_0^t \overline{\nabla} f(\widetilde{X}_s)d\widetilde{M}_s
\end{equation}

\noindent{locally in $\mathcal{H}^1$, where $\overline{\nabla}f(x)\in\partial f(x)$ is some measurable choice of a subgradient of $f$ at $x$.}
\end{lemma}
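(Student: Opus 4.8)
The plan is to exploit that each $f_n$ is $\mathcal{C}^2$, so It\^o's formula applies to $f_n(\widetilde X)$ and identifies its local martingale part as exactly $\int_0^\cdot\nabla f_n(\widetilde X_s)\,d\widetilde M_s$, the remainder $\int\langle\nabla f_n(\widetilde X_s),dA_s\rangle+\tfrac12\int\mathrm{tr}(\nabla^2 f_n(\widetilde X_s)\,d\langle\widetilde X\rangle_s)$ being of finite variation. Since the assertion is only local in $\mathcal{H}^1$, I would first localise by the stopping times $T_r=\inf\{t\ge0:\ |\widetilde X_t|\ge r\ \text{or}\ \langle\widetilde M\rangle_t\ge r\}\wedge r$, which increase to $\infty$, and prove convergence of each stopped difference $N^n:=\int_0^{\cdot\wedge T_r}\big(\nabla f_n(\widetilde X_s)-\overline{\nabla}f(\widetilde X_s)\big)\,d\widetilde M_s$. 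As $N^n$ is a continuous local martingale, the paper's norm reduces to $\|N^n\|_{\mathcal{H}^1}=\mathbb E[\langle N^n\rangle_\infty^{1/2}]$, so everything comes down to showing
\[
\mathbb E\Big[\Big(\int_0^{T_r}|\nabla f_n(\widetilde X_s)-\overline{\nabla}f(\widetilde X_s)|^2\,d\langle\widetilde M\rangle_s\Big)^{1/2}\Big]\xrightarrow[n\to\infty]{}0,
\]
the multidimensional quadratic-variation matrix being controlled by the same estimate with $|\cdot|^2$ times the trace of $d\langle\widetilde M\rangle$.

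For the integrand I would invoke Theorem~\ref{C-theorem 4}: at every $x\in\mathcal D$ we have $\nabla f_n(x)\to\nabla f(x)=\overline{\nabla}f(x)$, so $\nabla f_n(\widetilde X_s)\to\overline{\nabla}f(\widetilde X_s)$ whenever $\widetilde X_s\in\mathcal D$. On the stopped interval the bounds \eqref{c8} and \eqref{c9} give $|\nabla f_n(\widetilde X_s)-\overline{\nabla}f(\widetilde X_s)|\le 2C_r$ uniformly in $n,s$, and with $\langle\widetilde M\rangle_{T_r}\le r$ this yields the deterministic dominating bound $2C_r\,r^{1/2}$ required for dominated convergence. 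Hence the displayed limit will follow once the integrand converges to $0$ for $d\langle\widetilde M\rangle\otimes d\mathbb P$-almost every $(s,\omega)$, i.e. once $\widetilde X$ is shown to charge the exceptional set $\mathcal D^c$ with zero mass \emph{with respect to the quadratic-variation measure} $d\langle\widetilde M\rangle_s$, not merely in Lebesgue time.

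This last point is the real obstacle, and it is genuinely stronger than Lemma~\ref{lemma - density of the perturbed process} as stated, which controls only Lebesgue-time occupation. Writing $d\langle\widetilde M\rangle_s=d\langle M\rangle_s+\epsilon^2\,ds$ (the cross term vanishes since $B$ is independent of $X$), the $\epsilon^2\,ds$ part is disposed of exactly as in Lemma~\ref{lemma - density of the perturbed process} by Fubini. For the possibly singular part $d\langle M\rangle_s$ I would condition on the $X$-filtration $\mathcal F^X$: the increasing process $\langle M\rangle$ is $\mathcal F^X$-measurable, while for each fixed $s>0$ the conditional law of $\widetilde X_s=X_s+\epsilon B_s$ given $\mathcal F^X$ is Gaussian with variance $\epsilon^2 s>0$, hence absolutely continuous, so $\mathbb P(\widetilde X_s\in\mathcal D^c\mid\mathcal F^X)=0$. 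A conditional Fubini argument then gives
\[
\mathbb E\Big[\int_0^{T_r}\mathbf{1}_{\{\widetilde X_s\in\mathcal D^c\}}\,d\langle M\rangle_s\Big]=\mathbb E\Big[\int_0^{T_r}\mathbb P(\widetilde X_s\in\mathcal D^c\mid\mathcal F^X)\,d\langle M\rangle_s\Big]=0,
\]
so that $\{(s,\omega):\widetilde X_s\in\mathcal D^c\}$ is indeed $d\langle\widetilde M\rangle\otimes d\mathbb P$-null. This is the step making essential use of both the independence and the nondegeneracy of the Brownian perturbation, and is where I expect the argument to be most delicate.

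Combining these, dominated convergence yields $\|N^n\|_{\mathcal{H}^1}\to0$ for every $r$, which is the stated local $\mathcal{H}^1$ convergence, and it also shows the particular measurable selection of $\overline{\nabla}f$ on $\mathcal D^c$ is irrelevant. Finally, to identify $\int_0^\cdot\overline{\nabla}f(\widetilde X_s)\,d\widetilde M_s$ as the genuine local martingale part of $f(\widetilde X)$, I would appeal to the completeness of $\mathcal{H}^1$ and $\mathcal{A}^1$ together with \cite[Thm.~1]{Barlow.Protter.90} and \cite[Thm.~2]{CarlenProtter92}: since $f_n\uparrow f$ with $f_n(\widetilde X_t)\to f(\widetilde X_t)$ and the martingale parts converge in $\mathcal{H}^1$ locally, the limiting decomposition is the canonical semimartingale decomposition of $f(\widetilde X)$, whose local martingale part must therefore coincide with the $\mathcal{H}^1$-limit just constructed.
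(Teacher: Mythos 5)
Your proposal follows the same overall strategy as the paper's proof: It\^o's formula identifies $\int\nabla f_n(\widetilde X)\,d\widetilde M$ as the local martingale part of $f_n(\widetilde X)$; the Carlen--Protter result \cite[Thm.~2]{CarlenProtter92} (resting on \cite[Thm.~1]{Barlow.Protter.90}) identifies the local martingale part of $f(\widetilde X)$ as the local $\mathcal{H}^1$-limit of these integrals; and that limit is computed after localisation by dominated convergence, using the uniform bounds \eqref{c8}--\eqref{c9} and the pointwise convergence $\nabla f_n\to\nabla f$ on $\mathcal{D}$ from Theorem~\ref{C-theorem 4}, passing through an $\mathcal{H}^2$-type estimate exactly as the paper does.

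Where you genuinely depart from the paper is at the step you yourself flag as delicate: showing that $\{(s,\omega):\widetilde X_s\in\mathcal{D}^c\}$ is null for $d\langle\widetilde M\rangle\otimes d\mathbb{P}$ and not merely for $ds\otimes d\mathbb{P}$. The paper disposes of this by citing Lemma~\ref{lemma - density of the perturbed process}, which asserts only that $\widetilde X$ spends zero \emph{Lebesgue} time in null sets; since $d\langle\widetilde M\rangle_s=d\langle M\rangle_s+\epsilon^2\,ds$ and $d\langle M\rangle$ may be singular with respect to $ds$ (e.g.\ for a martingale obtained from Brownian motion by a singular continuous time change), that citation does not by itself show the exceptional set is uncharged by the quadratic-variation measure. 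Your conditional Fubini argument --- condition on the path of $X$, so that $\langle M\rangle$ is determined while the conditional law of $\widetilde X_s$ is Gaussian with variance $\epsilon^2 s>0$, whence $\mathbb{P}(\widetilde X_s\in\mathcal{D}^c\mid\mathcal{F}^X)=0$ for every $s>0$, then integrate against $d\langle M\rangle_s$ --- closes exactly this gap and is correct (the endpoint $s=0$ is harmless since $\langle M\rangle$ is continuous and vanishes there; and stopping at your $T_r$ only decreases the nonnegative integral). So your reconstruction matches the paper's route in structure but is more complete at its one genuinely delicate point: what the paper buys by its shortcut is brevity, and what your version buys is a proof that actually covers semimartingales whose martingale part has quadratic variation singular in time.
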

\smallskip

\begin{proof}
Since for each $n\geq 1$ $f_n$ is a $\mathcal{C}^2$ function, the martingale part of $f_n(\widetilde{X})$ is given by $\int\nabla f_n(\widetilde{X})d\widetilde{M}$, where $\widetilde{M}=M+\epsilon B$.
 The result of Carlen and Protter, applied to our sequence $\{f_n\}_{n\geq 1}$ and the semimartingale $\widetilde{X}$, then ensures that the martingale part of the limiting process $f(\widetilde{X}_{t})$ is given by the limit of $\int \nabla f_n(\widetilde{X})d\widetilde{M}$ as $n$ tends to infinity, locally in $\mathcal{H}^1$. Our aim is to prove that this limit is indeed equal to $\int \overline{\nabla}f(\widetilde{X})d\widetilde{M}$ for some measurable choice of a subgradient $\overline{\nabla}f\in\partial f$.

 We first need to suitably localise our process. Let $B(r)$ be an open ball of radius $r$ and $B(r')$ an open ball of radius $r'$ with $r'>r>0$, both centred at the origin. For all $r,r'>0$ define stopping times $T_r:=\inf \{t:  X_t\notin B(r)\}$ and $\widetilde{T}_{r'}:=\inf\{t: \widetilde{X}_t\notin B(r')\}$ and take $\widetilde{T}=T_r\wedge \widetilde{T}_{r'}$. Assume also that $\widetilde{X}_{t\wedge \widetilde{T}},X_{t\wedge \widetilde{T}}\in\mathcal{H}^1$ for all $t\geq 0$; we know that continuous semimartingales are at least locally in $\mathcal{H}^1$. We consider the stopped process $\widetilde{X}_{t\wedge \widetilde{T}}$. Note that $X_{t\wedge \widetilde{T}}\in B(r)\subset B(r')$ and $\widetilde{X}_{t\wedge \widetilde{T}}\in B(r')$ for all $t\geq 0$. By Lemma \ref{lemma - density of the perturbed process} the law of the localised process $\widetilde{X}_{t\wedge \widetilde{T}}$ under $\widetilde{\mathbb{P}}$ has the density for all $t<\widetilde{T}$; whether $\widetilde{X}_{\widetilde{T}}$ is in $\mathcal{D}$ or not is not important, since it doesn't affect the value of the integrals $\int_0^{\widetilde{T}}\overline{\nabla}f_n(\widetilde{X}_s)d\widetilde{M}_s$, for $n\geq 1$, and $\int_0^{\widetilde{T}}{\nabla}f(\widetilde{X}_s)d\widetilde{M}_s$.

Note that for proving Lemma \ref{C-lemma 8} it would have sufficed to stop $\widetilde{X}$ at $\widetilde{T}_{r'}$. However, in order to be consistent with localisation we will be using to prove Theorem \ref{C-theorem 1} and also to prove Lemma \ref{c: lemma: conditions} below, we use $\widetilde{T}=T_r\wedge \widetilde{T}_{r'}$ instead.

Notice that convergence of a continuous (local) martingale $M$ in $\mathcal{H}^p$ is equivalent to convergence of $\langle M, M\rangle^{1/2}$ in $\mathcal{L}^p$. So, in this case convergence in $\mathcal{H}^p$ implies convergence in $\mathcal{H}^l$ for $1\leq l<p$. In our case it is easier to prove convergence \eqref{c18} in $\mathcal{H}^2$ and then deduce convergence in $\mathcal{H}^1$.
For any measurable selection $\overline{\nabla}f\in\partial f$ and $t>0$ we have
\begin{gather*}\label{c10}
\lim_{n\rightarrow\infty}\big|\big| \int_0^{t\wedge\widetilde{T}} \left(\nabla f_n(\widetilde{X}_s)-\overline{\nabla}f(\widetilde{X}_s)\right)d\widetilde{M}_s\big|\big|_{\mathcal{H}^2}\cr
=\lim_{n\rightarrow\infty}\mathbb{E}\Big[\int^{t\wedge\widetilde{T}}_0\left(
\nabla f_n(\widetilde{X}_s)-\overline{\nabla}f(\widetilde{X}_s)\right)^2d\langle \widetilde{M}, \widetilde{M}\rangle_s \Big]^{1/2} \ .
\end{gather*}

Using inequalities \eqref{c8} and \eqref{c9} we can bound the expression inside the expectation sign above as follows
\begin{equation*}
\begin{split}
\int^{t\wedge\widetilde{T}}_0\left(
\nabla f_n(\widetilde{X}_s)-\overline{\nabla}f(\widetilde{X}_s)\right)^2d\langle \widetilde{M}, \widetilde{M}\rangle_s\leq 4C^2_{r'}\int_0^{t\wedge\widetilde{T}}d\langle\widetilde{M},\widetilde{M}\rangle_s\\
\leq 4C^2_{r'}\langle\widetilde{M},\widetilde{M}\rangle_{\widetilde{T}}<\infty\ ,
\end{split}
\end{equation*}

\noindent{where the quadratic variation $\langle \widetilde{M}, \widetilde{M}\rangle_{t\wedge \widetilde{T}}$ is finite because it is the bracket of a bounded continuous semimartingale $\widetilde{X}_{t\wedge \widetilde{T}}$ (see \cite[Ch. IV, Thm. 1.3]{revuz.yor}). Using dominated convergence theorem we can now take the limit inside the expectation sign and, since the integrand is bounded above by $4C^2_{r'}$, we can also pull the limit inside the integral sign. We can then use almost sure convergence of $\nabla f_n(\widetilde{X}_t)$ to $\nabla f(\widetilde{X}_t)$ for all $\widetilde{X}_t\in\mathcal{D}$ and the fact that particular choices $\overline{\nabla}f(\widetilde{X}_t)\in\partial f(\widetilde{X}_t)$ for $\widetilde{X}_t\in\mathcal{D}^c$ are not charged by the integral to conclude that the limit in question is equal to}
\begin{equation*}
\mathbb{E}\Big[\int^{t\wedge\widetilde{T}}_0
\lim_{n\rightarrow\infty}\left(\nabla f_n(\widetilde{X}_s)-\overline{\nabla}f(\widetilde{X}_s)\right)^2d\langle \widetilde{M}, \widetilde{M}\rangle_s\ \Big]^{1/2}=0 \ .
 \end{equation*}

It follows that $\int^{t\wedge\widetilde{T}}_0 \nabla f_n(\widetilde{X}_s)d\widetilde{M}_s$ converges to $\int^{t\wedge\widetilde{T}}_0 \overline{\nabla} f(\widetilde{X}_s)d\widetilde{M}_s$ in $\mathcal{H}^2$ and, hence, in $\mathcal{H}^1$. This is true for any radii $r'>r>0$ of localisation, and so \eqref{c18} follows.

\end{proof}

We also prove the following lemma concerning the semimartingale decomposition of $f(\widetilde{X})$ which we will require for the proof of Theorem \ref{C-theorem 1}.

\begin{lemma}\label{c: lemma: conditions} Let $\widetilde{N}^{(\epsilon)}$ and $\widetilde{S}^{(\epsilon)}$ be the martingale and the finite variation parts of the semimartingale decomposition of $f(\widetilde{X}^{(\epsilon)})$ respectively. Then for all $\epsilon\leq 1$
\begin{subequations}
\begin{equation}\label{c-(a)}
\mathbb{E}\Big[\sup_{t\leq \widetilde{T}}\vert \widetilde{N}^{(\epsilon)}_t\vert\Big]\leq K_{r,r'}\ ,
\end{equation}
\begin{equation}\label{c-(b)}
\mathbb{E}\Big[\int_0^{\widetilde{T}}\vert d\widetilde{S}^{(\epsilon)}_t\vert\Big]\leq K_{r,r'} \ ,
\end{equation}
\end{subequations}
\noindent{where $K_{r,r'}$ is a constant depending on $r$ and $r'$ and independent of $\epsilon$.}
\end{lemma}

\begin{proof}
The proof largely follows proof of \cite[Thm. 1]{Carlen.Protter.92}: we prove that the sequence of continuous semimartingales $\{f_n(\widetilde{X})\}_{n\geq 1}$ satisfies the conditions of \cite[Thm. 1]{Barlow.Protter.90}, i.e. that
\begin{subequations}
\begin{equation}\label{c-(i)}\
\lim_{n\rightarrow\infty}\mathbb{E}\Big[\sup_{t\leq \widetilde{T}}\vert f_n(\widetilde{X}_t)-f(\widetilde{X}_t)   \vert \Big]=0
\end{equation}
\begin{equation}\label{c-(ii)}
\sup_{n\geq 1}\mathbb{E}\Big[\sup_{t\leq \widetilde{T}}\vert \widetilde{N}^n_t\vert\Big]\leq K_{r,r'}\ ,
\end{equation}
\begin{equation}\label{c-(iii)}
\sup_{n\geq 1}\mathbb{E}\Big[\int_0^{\widetilde{T}}\vert d\widetilde{S}^n_t\vert\Big]\leq K_{r,r'} \ ,
\end{equation}
\end{subequations}

\noindent{where $\widetilde{N}^n$ and $\widetilde{S}^n$, for $n\geq 1$, are the martingale and the finite variation part of the decomposition of $f_n(\widetilde{X})$ respectively. Then \eqref{c-(a)} and \eqref{c-(b)} will follow immediately by \cite[Thm. 1]{Barlow.Protter.90}. The difference from the proof of \cite[Thm. 1]{Carlen.Protter.92} is only in the fact that we need to ensure that for small enough $\epsilon$ the constant $K_{r,r'}$ above can be taken to be independent of $\epsilon$ (this is necessary in order to apply \cite[Thm. 1]{Barlow.Protter.90} to the sequence of semimartingales $\{f(\widetilde{X}^{(\epsilon)})\}_{\epsilon>0}$ in the proof of Theorem 1).}

First of all notice that \eqref{c-(i)} follows from the fact that $\lim_{n\rightarrow\infty}\rho(f_n, f)=0$. Next we consider \eqref{c-(ii)}; for each $n\geq 1$ the martingale part of $f_n(\widetilde{X})$ is given by the stochastic integral $\widetilde{N}^n=\int{\nabla}f_n(\widetilde{X})d\widetilde{M}$. By the Burkholder-Davis-Gundy inequality we have for some constant $p<\infty$
\begin{equation*}
\begin{split}
\mathbb{E}\Big[\sup_{t\leq \widetilde{T}}\vert \widetilde{N}^n_t\vert\Big]&\leq p\mathbb{E}\Big[\langle \widetilde{N}^n, \widetilde{N}^n\rangle_{\widetilde{T}}^{1/2}\Big]\\
&= p\mathbb{E}\Big[\Big(\int_0^{\widetilde{T}}\vert{\nabla}f_n(\widetilde{X}_t)\vert^2 d\langle \widetilde{M}, \widetilde{M}\rangle_t\Big)^{1/2}\Big]\\
&\leq pC_{r'}\mathbb{E}\Big[\langle \widetilde{M}, \widetilde{M}\rangle_{\widetilde{T}}^{1/2}\Big]\ ,
\end{split}
\end{equation*}

\noindent{where the second inequality follows by inequality \eqref{c8} in Lemma \ref{C-lemma: inequalities}. To finish we need to bound $\langle \widetilde{M}, \widetilde{M}\rangle_{\widetilde{T}}$ by some constant independent of $\epsilon$. We have $\langle \widetilde{M},\widetilde{M} \rangle_{\widetilde{T}}=\langle M, M\rangle_{\widetilde{T}}+\epsilon^2 \widetilde{T}$ which for all $\epsilon\leq 1$ is less or equal to $\langle M, M\rangle_{\widetilde{T}}+{\widetilde{T}}$ which is in turn bounded above by $\langle M, M\rangle_{T_r}+T_r$, since $T_r\geq {\widetilde{T}}=T_r\wedge \widetilde{T}_{r'}$. Hence, for all $\epsilon\leq 1$}
\begin{equation*}
\mathbb{E}\Big[\sup_{t\leq \widetilde{T}}\vert \widetilde{N}^n_t\vert\Big]\leq pC_{r'}\mathbb{E}\Big[(\langle {M}, {M}\rangle_{T_r}+ T_r)^{1/2}\Big] \ ,
\end{equation*}

\noindent{where the right-hand side is independent of $\epsilon$ as well as $n$, and so \eqref{c-(ii)} follows.}

The proof of \eqref{c-(iii)} largely mimics the argument in Carlen and Proter \cite[pp. 4-5]{Carlen.Protter.92}, modulo obvious simplifications to allow for the fact that our case is continuous and using Lipschitz continuity of $f$ in $B(r')$.

The assertion of the lemma now follows by \cite[Thm. 1]{Barlow.Protter.90}.
\end{proof}

\par\bigskip


\section{Proof of Theorem 1}

{Finally we need to derive the analogous result for our original object of interest, continuous semimartingale $X$. }\smallskip

\begin{proof}[Proof of Theorem 1] We have $\lim_{\epsilon\downarrow 0}\widetilde{X}^{(\epsilon)}=X$ almost surely and, thus, for a continuous convex $f$, $\lim_{\epsilon\downarrow 0}f(\widetilde{X}^{(\epsilon)})=f(X)$ almost surely. Note that the limit of the process $\widetilde{X}^{(\epsilon)}$ as $\epsilon$ tends to zero lives in the enlarged probability space $(\widetilde{\Omega}, \widetilde{\mathcal{F}}, \{\widetilde{\mathcal{F}}_t\}_{t\geq 0}, \widetilde{\mathbb{P}})$, even though the original process $X$ is defined on $(\Omega, \mathcal{F},\{{\mathcal{F}}_t\}_{t\geq 0}, \mathbb{P})$. We use the same localisation as in the proof of Lemma \ref{C-lemma 8}, i.e. we consider $\widetilde{X}_{t\wedge \widetilde{T}}$ with $\widetilde{T}=T_r\wedge \widetilde{T}_{r'}=\inf \{t:  X_t\notin B(r)\}\wedge \inf\{t: \widetilde{X}_t\notin B(r')\}$, with $r'>r>0$.

Crucially by It\^{o}'s lemma $f(\widetilde{X}^{(\epsilon)})$ is a continuous semimartingale for every $\epsilon>0$. Hence, we can apply the result of Barlow and Protter \cite[Thm. 1]{Barlow.Protter.90} if we can show that the conditions of the theorem are satisfied in our case, i.e. that
\begin{subequations}
\begin{equation*}\label{cc-(i)}\
\lim_{\epsilon\downarrow 0}\mathbb{E}\Big[\sup_{t\leq \widetilde{T}}\vert f(\widetilde{X}^{(\epsilon)}_t)-f({X}_t)   \vert \Big]=0
\end{equation*}
\begin{equation*}\label{cc-(ii)}
\sup_{\epsilon>0}\mathbb{E}\Big[\sup_{t\leq \widetilde{T}}\vert \widetilde{N}^{(\epsilon)}_t\vert\Big]\leq K_{r,r'}\ ,
\end{equation*}
\begin{equation*}\label{cc-(iii)}
\sup_{\epsilon>0}\mathbb{E}\Big[\int_0^{\widetilde{T}}\vert d\widetilde{S}^{(\epsilon)}_t\vert\Big]\leq K_{r,r'} \ ,
\end{equation*}
\end{subequations}

\noindent{where $\widetilde{N}^{(\epsilon)}$ and $\widetilde{S}^{(\epsilon)}$ are the martingale and the finite variation parts of the semimartingale decomposition of $f(\widetilde{X}^{(\epsilon)})$ respectively and $K_{r,r'}$ is some finite constant which only depends on $r$ and $r'$. In view of Lemma \ref{c: lemma: conditions} we need to check only the first of the three conditions above (we can assume that $\epsilon\leq 1$). Using the fact that $f$ is Lipschitz in the ball $B(r')$, we have}
\begin{equation*}
\mathbb{E}\Big[\sup_{t\leq \widetilde{T}}\vert f(\widetilde{X}_t)-f(X_t)\vert\Big]\leq K_{r'}\mathbb{E}\Big[\sup_{t\leq \widetilde{T}}\vert \widetilde{X}_t-X_t\vert\Big] = \epsilon K_{r'}\mathbb{E}\Big[\sup_{t\leq \widetilde{T}}\vert B_t\vert\Big] \ ,
\end{equation*}

\noindent{where $K_{r'}<\infty$ is a Lipschitz constant depending on $r'$. Taking the limit $\epsilon\rightarrow 0$ gives the desired result.}
Together with expressions \eqref{c-(a)} and \eqref{c-(b)} of Lemma \ref{c: lemma: conditions} this ensures that the conditions of \cite[Thm. 1]{Barlow.Protter.90} are satisfied in our case. From Lemma \ref{C-lemma 8} we know that for each $\epsilon>0$ the martingale part of $f(\widetilde{X}^{(\epsilon)})$ is equal to $\widetilde{N}^{(\epsilon)}=\int\overline{\nabla}f(\widetilde{X}^{(\epsilon)})d\widetilde{M}^{(\epsilon)}$; it now follows immediately that the martingale part of $f(X)$ is given by the limit as $\epsilon\rightarrow 0$ of $\widetilde{N}^{(\epsilon)}$, locally in $\mathcal{H}^1$. All is left to prove now is that this limit is given by $\int \overline{\nabla}f(X)dM$ for some measurable choice of $\overline{\nabla}f(x)\in\partial f(x)$, i.e. that for all $t>0$

\begin{equation}\label{c11}
\lim_{\epsilon\downarrow 0}  \int_0^t \overline{\nabla}f(\widetilde{X}^{(\epsilon)}_{s\wedge {\widetilde{T}}})d\widetilde{M}^{(\epsilon)}_{s\wedge {\widetilde{T}}}=\int_0^{t} \overline{\nabla}f({X}_{s\wedge T_r})dM_{s\wedge T_r}
\end{equation}

\noindent{in $\mathcal{H}^1$ for all $r'>r>0$.}

Proving the above convergence will require us to consider the limit of  $\overline{\nabla}f(\widetilde{X}^{(\epsilon)}_{t\wedge {\widetilde{T}}})$ as $\epsilon$ tends to 0.
From Theorem \ref{C-lemma 9} we know that for all $t\geq 0$ for almost all values of $B_t$ the limit $\lim_{\epsilon\downarrow 0}\overline{\nabla}f(X_t+\epsilon B_t)$ exists and belongs to $\partial f(X_t)$. Denote this limit by $\overline{\nabla}f(X_t)$. Also for any path of $X$ and $B$ for small enough $\epsilon$, i.e. eventually for all $\epsilon$, we have $T_r<\widetilde{T}_{r'}$. That is ${\widetilde{T}}=T_r\wedge \widetilde{T}_{r'}\rightarrow T_r$ as $\epsilon\rightarrow 0$ a.s. and so
\begin{equation}\label{c22}
\lim_{\epsilon\downarrow 0}\overline{\nabla}f(X_{t\wedge {\widetilde{T}}}+\epsilon B_{t\wedge {\widetilde{T}}})=\overline{\nabla}f(X_{t\wedge T_r})\quad \hbox{a.s.} \ .
\end{equation}

Again we consider convergence in $\mathcal{H}^2$ first, and convergence in $\mathcal{H}^1$ follows. We have, using the fact that $\lim_{\epsilon\downarrow 0}\widetilde{M}_{t\wedge {\widetilde{T}}}=\lim_{\epsilon \downarrow 0}(M_{t\wedge \widetilde{T}}+\epsilon B_{t\wedge \widetilde{T}})=\lim_{\epsilon\downarrow 0}M_{t\wedge {\widetilde{T}}}$ a.s.
\begin{multline}\label{c12}
\lim_{\epsilon\downarrow 0}\big|\big|  \int_0^t \overline{\nabla}f(\widetilde{X}_{s\wedge {\widetilde{T}}})d\widetilde{M}_{s\wedge {\widetilde{T}}}-\int_0^{t} \overline{\nabla}f({X}_{s\wedge T_r})dM_{s\wedge T_r}\big|\big|_{\mathcal{H}^2}\cr
=\lim_{\epsilon\downarrow 0}\mathbb{E}\big[ \int_0^{\widetilde{T}} \overline{\nabla}f(\widetilde{X}_s)^2d\langle M_{s},M_{s}\rangle +  \int_0^{T_r} \overline{\nabla}f(X_s)^2d\langle M_{s},M_{s}\rangle\big.\cr
\Big.-2\int_0^{\infty}\overline{\nabla}f(\widetilde{X}_{s\wedge {\widetilde{T}}})\overline{\nabla} f(X_{s\wedge T_r})d\langle M_{s\wedge {\widetilde{T}}}, M_{s\wedge T_r}\rangle\big]^{1/2} \ .
\end{multline}

Once again we can use Lemma \ref{C-lemma: inequalities} to see that the first integrand in \eqref{c12} is bounded above by $C_{r'}^2<\infty$, while the third integrand is bounded above by $C_{r'}C_r<\infty$. Thus we have
\begin{equation*}
\int_0^{\widetilde{T}} \overline{\nabla}f(\widetilde{X}_s)^2d\langle M_{s},M_{s}\rangle\leq  C_{r'}^2\langle M_{\widetilde{T}}, M_{\widetilde{T}}\rangle<\infty
\end{equation*}

\noindent{and}
\begin{equation*}
\int_0^{\infty}\overline{\nabla}f(\widetilde{X}_{s\wedge {\widetilde{T}}})\overline{\nabla} f(X_{s\wedge T_r})d\langle M_{s\wedge {\widetilde{T}}}, M_{s\wedge T_r}\rangle\leq C_{r'}C_r\langle M_{T_r},M_{T_r}\rangle<\infty\ ,
\end{equation*}

\noindent{where we use the fact that $\langle M, M\rangle_{t\wedge T_r}$, resp. $\langle M, M\rangle_{t\wedge \widetilde{T}}$, is finite being the bracket of the bounded continuous semimartingale $X_{t\wedge T_r}$, resp. $X_{t\wedge \widetilde{T}}$. Appealing to the dominated and bounded convergence theorems we can interchange the limit in \eqref{c12} with the expectation and the integration signs respectively.
Convergence \eqref{c22} and the fact that ${\widetilde{T}}\rightarrow T_r$ a.s. then lead us to conclude that the limit \eqref{c12} is equal to 0 and so we obtain \eqref{c11}. Noticing that the above is true for all $r'>r>0$ concludes the proof.}

\end{proof}


\paragraph{Example.}
As was mentioned before, in \cite{Bouleau.84}, Bouleau has proved that {\it any} measurable choice of subgradient $\subo(X_t)$ works for the stochastic integral of Theorem \ref{C-theorem 1}. A function

\begin{equation}\label{c13}
\overline{\nabla}^e f(x)=\lim_{\theta\downarrow 0}\mathbb{E}[\overline{\nabla} f(x+\theta N)] \ ,
\end{equation}

\noindent{where $N$ is a standard $d$-dimensional Gaussian random variable, is a particular example. $\overline{\nabla}^e f(x)$ can be regarded as a sort of an average of (sub)gradients within the vicinity of $x$. To verify that it does indeed define a subgradient of $f$ at each $x\in \mathbb{R}^d$ we check the subgradient inequality \eqref{c4} of Theorem \ref{C-theorem 3}. For any $y\in\mathbb{R}^d\backslash\{0\}$ we have}

\begin{equation}\label{c20}
 \langle \overline{\nabla}^e f(x), y \rangle =\langle \lim_{\theta \downarrow 0}\mathbb{E}[\subo(x+\theta N)], y\rangle
 = \lim_{\theta \downarrow 0}\mathbb{E}[\langle\subo(x+\theta N), y\rangle]\ .
\end{equation}


\noindent{Now, by the Lipschitz property of $f$ and by the subgradient inequality \eqref{c4} we have}

\begin{gather*}
\langle \overline{\nabla}f(x+\theta N),y\rangle\leq D(x+\theta N)[y]=\inf_{\lambda>0}\frac{f(x+\theta N +\lambda y)-f(x+\theta N)}{\lambda}\\
\leq f(x+\theta N+y)-f(x+\theta N)\leq K\vert y\vert
\end{gather*}

\noindent{for some Lipschitz constant $K<\infty$ depending on $x$ and $N$. Appealing to the bounded convergence theorem now allows us to take the limit inside the expectation in equation \eqref{c20} above}

\begin{equation}\label{c14}
\langle \overline{\nabla}^e f(x), y \rangle = \mathbb{E}[\langle\lim_{\theta \downarrow 0}\subo(x+\theta N), y\rangle] \ .
\end{equation}

{But by Theorem \ref{C-lemma 9} $\lim_{\theta\downarrow 0}\overline{\nabla}f(x+\theta N)$ exists, is unique and belongs to $\partial f(x)$ for almost all $N$. Denote this limit by $\overline{\nabla}^*f(x)$. Then \eqref{c14} is equal to}

\begin{equation*}
\mathbb{E}[\langle \overline{\nabla}^*f(x), y \rangle]\leq \mathbb{E}[Df(x)[y]]=Df(x)[y] \ .
\end{equation*}

\noindent{Hence, we have $\langle \overline{\nabla}^e f(x), y\rangle \leq Df(x)[y]$ for any $y\in\mathbb{R}^d\backslash\{0\}$ for all $x$, and so $\overline{\nabla}^ef(x)$ is a well-defined subgradient of $f$. }

\medskip

\noindent{\large {\bf Acknowledgements.} This note is a part of my PhD thesis and, therefore, I would like to thank my supervisor Prof Wilfrid Kendall for all the help he gave me and acknowledge funding from the Statistics department of Warwick University which supports my studies. I would also like to thank Michel \'{E}mery for helpful discussions during Probability Summer School at Saint-Flour in July 2008 and Larbi Alili and the anonymous referee for helpful comments on the earlier versions of this note.

\bibliographystyle{plainnat}
\bibliography{ref}

\end{document}